\documentclass[11pt]{article}

\usepackage[utf8]{inputenc} 
\usepackage[T1]{fontenc}      
\usepackage{amsmath}        
\usepackage{amssymb}        
\usepackage{amsthm}         
\usepackage{graphicx}       
\usepackage{hyperref}       
\usepackage{geometry}       

\geometry{a4paper, margin=1in}

\title{An infinite family of overpartition congruences mod powers of 2}
\author{Zhumagali Shomanov \\
        Department of Mathematics \\
        University of Florida \\
        \texttt{zshomanov@ufl.edu}
        \and
        Frank Garvan \\
        Department of Mathematics \\
        University of Florida \\
        \texttt{fgarvan@ufl.edu}}
\date{\today}

\theoremstyle{plain}
\newtheorem{theorem}{Theorem}[section] 
\newtheorem{lemma}[theorem]{Lemma}

\newtheorem{corollary}[theorem]{Corollary}

\theoremstyle{definition}

\theoremstyle{remark}



\begin{document}

\maketitle

\begin{abstract}
We prove an infinite family of Hecke-like congruences for the overpartition function modulo powers of 2. Starting from a recent identity of Garvan and Morrow and iterating Atkin’s $U_2$ operator, we determine lower bounds on the 2-adic valuations of the coefficients that arise at each step. Our approach yields new modular equations relating the Hauptmoduln $G_2$ on $\Gamma_0(2)$ and $G_8$ on $\Gamma_0(8)$, together with explicit $U_2$–action formulas.
\end{abstract}

\tableofcontents

\section{Introduction}
\label{sec:introduction}

An overpartition of a non-negative integer $n$ is a partition of $n$ in which the first occurrence of each distinct part may be overlined. For example, let's find the overpartitions of $n=3$. The ordinary partitions of 3 are $3$, $2+1$, and $1+1+1$. We apply the overlining rule (first occurrence of a part may be overlined):
\[
3,\overline{3},2+1,\overline{2}+1, 2+\overline{1}, \overline{2}+\overline{1},1+1+1,\overline{1}+1+1.
\]
Thus, there are 8 overpartitions of 3. So, $\overline{p}(3) = 8$.
The generating function for $\bar{p}(n)$, noted by Corteel and Lovejoy \cite{CorteelLovejoy}, is given by
$$ \sum_{n=0}^{\infty} \overline{p}(n) q^n = \prod_{k=1}^{\infty} \frac{1+q^k}{1-q^k}=\dfrac{\eta(2\tau)}{\eta(\tau)^2}=\varphi(\tau), $$
where $q=e^{2\pi i \tau}$ and $\tau\in \mathbb{H} = \{ \tau \in \mathbb{C} \mid \operatorname{Im} \tau > 0 \}$.

Overpartitions satisfy numerous congruences, particularly modulo powers of 2. Here are a few examples \cite{FJM}, \cite{HS}
\begin{align*}
\overline{p}(4n+3) &\equiv 0 \pmod 8, \\
\overline{p}(8n+7) &\equiv 0 \pmod{64},\\
\overline{p}(8191^4n+8191^3)&\equiv0\pmod{4096}.
\end{align*}
The last result follows from
\begin{equation}\label{OVP}
\sum_{n=0}^\infty \left(\ell^3 \overline{p}(\ell^2 n)+\ell\left(\dfrac{-n}{\ell}\right)\overline{p}(n)+\overline{p}\left(\dfrac{n}{\ell^2}\right)\right)q^n\equiv(\ell^3+1)\sum_{n=0}^\infty \overline{p}(n)q^n\pmod{2^{12}},
\end{equation}
which, in turn, follows from a more general result of Garvan and Morrow~\cite{GarvanMorrow}. Here $\left(\frac{a}{b}\right)$ denotes the Jacobi symbol and $\ell$ is an odd prime.

The main result of this paper is
\begin{equation}\label{OVPinfIntro}
\ell^3 \overline{p}(2^\alpha\ell^2 n)+\ell\left(\dfrac{-2^\alpha n}{\ell}\right)\overline{p}(2^\alpha n)+\overline{p}\left(\dfrac{2^\alpha n}{\ell^2}\right)\equiv (\ell^3+1)\overline{p}(2^\alpha n)\pmod{2^{\alpha+12}}
\end{equation}
for all $n\ge0$, where $\ell$ is an odd prime and $\alpha\ge0$.

If we replace $n$ by $\ell n$ in (\ref{OVPinfIntro}), require $\ell\nmid n$, and $\ell\equiv-1\pmod{2^{\alpha+12}}$, we will obtain
\begin{equation*}
\overline{p}(2^\alpha\ell^3n)\equiv0\pmod{2^{\alpha+12}}.
\end{equation*}

\section{Modular Forms}

The full modular group is defined as
\[
\mathrm{SL}_2(\mathbb{Z}) = \left\{ \begin{pmatrix} a & b \\ c & d \end{pmatrix} : a,b,c,d \in \mathbb{Z},\; ad-bc = 1 \right\}.
\]
This group acts on the upper half-plane
\[
\mathbb{H} = \{ \tau \in \mathbb{C} \mid \operatorname{Im}\tau > 0 \}
\]
by linear fractional (Möbius) transformations
\[
\gamma \tau = \frac{a\tau+b}{c\tau+d}, \quad \text{for } \gamma = \begin{pmatrix} a & b \\ c & d \end{pmatrix} \in \mathrm{SL}_2(\mathbb{Z}).
\]
We will consider the subgroup $\Gamma_0(N)$ which is defined as 
\[
\Gamma_0(N) = \left\{ \begin{pmatrix} a & b \\ c & d \end{pmatrix} \in \mathrm{SL}_2(\mathbb{Z}) : c \equiv 0 \pmod{N} \right\}
\]
A \emph{cusp} of $\Gamma_0(N)$ is an equivalence class of points in $\mathbb{Q} \cup \{\infty\}$ under the action of $\Gamma_0(N)$. For each cusp $r$ of $\Gamma_0(N)$, choose $\sigma_r \in \mathrm{SL}_2(\mathbb{Z})$ with $\sigma_r \infty = r$. Let the cusp width be
\[
w_r = \min\left\{\, w>0:
\sigma_r^{-1}\begin{pmatrix}1&1\\[2pt]0&1\end{pmatrix}^w\sigma_r\in \Gamma_0(N) \right\}.
\]

A \emph{modular function} on $\Gamma_0(N)$ is a meromorphic function $f : \mathbb{H} \to \mathbb{C}$ satisfying:
\begin{enumerate}
    \item For all $\gamma = \begin{pmatrix} a & b \\ c & d \end{pmatrix}\in \Gamma_0(N)$,
    \[
    f(\gamma \tau) = f(\tau).
    \]   
    \item The function $f$ is meromorphic on the upper half-plane.  
    \item Let $r$ be a cusp of $\Gamma_0(N)$, and $\sigma_r$ be such that $\sigma_r\infty=r$. Let $q_r = e^{2\pi i \tau / w_r}$. Then
    \[
    f(\sigma_r\tau)=\sum_{n=n_0}^{\infty} a_n q_r^{n}
    \]
    with only finitely many negative indices $n$. The order at $r$ is
    \[
    \operatorname{Ord}(f,r,\Gamma_0(N)) \;=\; \min\{\, n \in \mathbb{Z} : a_n \neq 0 \,\}.
    \]
\end{enumerate}

A central role in our study is played by the Dedekind eta function, defined for $\tau\in\mathbb{H}$ by
\[
\eta(\tau) = e^{\pi i\tau/12} \prod_{n=1}^{\infty} (1-e^{2\pi in\tau}).
\]

It is well known that $\eta(\tau)$ is holomorphic and non-vanishing on $\mathbb{H}$. Moreover, $\eta(\tau)$ satisfies the transformation law
\[
\eta(\gamma \tau) = \nu_\eta(\gamma) \, (c\tau+d)^{1/2} \, \eta(\tau),
\]
for all $\gamma = \begin{pmatrix} a & b \\ c & d \end{pmatrix}\in \mathrm{SL}_2(\mathbb{Z})$, where, for $c>0$~\cite{Knopp2008}
\[
\nu_\eta(\gamma) = \begin{cases}
\left(\frac{d}{c}\right) e^{2\pi i((a+d)c - bd(c^2-1) - 3c)/24}, & \text{if } c \text{ is odd}, \\
\left(\frac{c}{d}\right) e^{2\pi i((a+d)c - bd(c^2-1) + 3d - 3 - 3cd)/24}, & \text{if } c \text{ is even}.
\end{cases}
\]
A function $f(\tau)$ of the form
\[
f(\tau)
=
\prod_{\delta\mid N}
\eta\bigl(\delta\tau\bigr)^{m_\delta},
\]
where \(N\) is a positive integer and each exponent \(m_\delta\) is an integer is called an eta-quotient.

\begin{theorem}[Newman~\cite{NewmanEtaQuotient}]\label{thm:NewmanEtaQuotient}
Let 
\begin{equation}\label{eq:eta-quotient}
f(\tau)=\prod_{\delta\mid N}\eta(\delta \tau)^{m_\delta},
\end{equation}
where \(m_\delta\in\mathbb{Z}\).  Then \(f(\tau)\) is a modular function on \(\Gamma_0(N)\) if
\begin{enumerate}
  \item \(\displaystyle \sum_{\delta\mid N}\delta\,m_\delta\equiv0\pmod{24},\)
  \item \(\displaystyle \sum_{\delta\mid N}\frac{N}{\delta}\,m_\delta\equiv0\pmod{24},\)
  \item \(\displaystyle \prod_{\delta\mid N}\delta^{\,m_\delta}\) is a rational square,
  \item \(\displaystyle \sum_{\delta\mid N}m_\delta=0.\)
\end{enumerate}
\end{theorem}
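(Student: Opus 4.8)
\ The plan is to check the three defining properties of a modular function in turn, essentially all the work being in the $\Gamma_0(N)$-invariance. Holomorphy on $\mathbb{H}$ is free: $\eta$ is holomorphic and nowhere vanishing there, hence so is the finite product $f$, which in particular has no zeros or poles in $\mathbb{H}$. For the cusp condition I would note first that the \emph{integrality} of the $q_r$-exponents is automatic once invariance is known, because the width $w_r$ is defined precisely so that the corresponding translation conjugates into $\Gamma_0(N)$; hence invariance forces $f\circ\sigma_r$ to be $w_r$-periodic and so a Laurent series in $q_r=e^{2\pi i\tau/w_r}$. Thus the cusp condition reduces to showing this series has only finitely many negative terms, and the two genuine tasks are (I) invariance, which will use all four hypotheses, and (II) a soft order bound at each cusp.

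For (I), take $\gamma=\left(\begin{smallmatrix}a&b\\c&d\end{smallmatrix}\right)\in\Gamma_0(N)$; replacing $\gamma$ by $-\gamma$ if needed (this changes neither $\gamma\tau$ nor $f(\gamma\tau)$), I may assume $c\ge0$. If $c=0$ then $\gamma=\pm\left(\begin{smallmatrix}1&b\\0&1\end{smallmatrix}\right)$ and $f(\gamma\tau)=e^{\pi i b\sum_{\delta\mid N}\delta m_\delta/12}f(\tau)=f(\tau)$ by hypothesis (1). If $c>0$, I would use that $\delta\mid N\mid c$, so $\gamma_\delta:=\left(\begin{smallmatrix}a&b\delta\\c/\delta&d\end{smallmatrix}\right)\in\mathrm{SL}_2(\mathbb{Z})$ (its determinant is $ad-bc=1$) and $\delta\,\gamma\tau=\gamma_\delta(\delta\tau)$. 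Applying the $\eta$-transformation law to each $\eta(\delta\gamma\tau)=\eta(\gamma_\delta(\delta\tau))$ and multiplying, the automorphy factors contribute $(c\tau+d)^{\frac12\sum_{\delta}m_\delta}=1$ by hypothesis (4), so
\[
f(\gamma\tau)=\chi(\gamma)\,f(\tau),\qquad \chi(\gamma):=\prod_{\delta\mid N}\nu_\eta(\gamma_\delta)^{m_\delta},
\]
and (I) becomes the statement that $\chi(\gamma)=1$. (It is worth recording that running the same substitution on $f^{24}=\prod_{\delta}\Delta(\delta\tau)^{m_\delta}$, where $\Delta=\eta^{24}$ has weight $12$ and trivial multiplier, together with (4), shows $f^{24}$ is \emph{honestly} $\Gamma_0(N)$-invariant; hence $\chi$ is a homomorphism $\Gamma_0(N)\to\mu_{24}$, so it would even suffice to verify $\chi=1$ on a generating set — though I would carry out the computation for a general $\gamma$.)

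The evaluation of $\chi(\gamma)$ is the crux. I would substitute the explicit formula for $\nu_\eta$, separating the divisors $\delta\mid N$ according to the parity of $c/\delta$ since the two branches of that formula differ; then each $\nu_\eta(\gamma_\delta)$ is a Jacobi symbol times a $24$th root of unity, and $\chi(\gamma)$ factors into a symbol part and a root-of-unity part. For the symbol part, quadratic reciprocity and multiplicativity collapse the accumulated symbols (formed from $d$, the $c/\delta$, and their gcd's) into a single Jacobi symbol whose numerator is $\prod_{\delta}\delta^{m_\delta}$, which hypothesis (3) says is a rational square — so the symbol part is $+1$. For the root-of-unity part, expanding the exponents of the $\nu_\eta(\gamma_\delta)$ and using $ad-bc=1$ (equivalently $ad\equiv1$, $bc\equiv-1$ to the relevant moduli) to cancel the cross terms, the total exponent becomes an integer plus remainders that are, modulo $24$, proportional to $bd\sum_{\delta}\delta m_\delta$ and to $c\sum_{\delta}(N/\delta)m_\delta$ — note the latter because $N\mid c$ makes $c/\delta$ a multiple of $N/\delta$ — and hypotheses (1) and (2) annihilate precisely these, so the root-of-unity part is $1$. (Since $\chi$ is a homomorphism with $\chi\!\left(\begin{smallmatrix}1&1\\0&1\end{smallmatrix}\right)=1$ by (1), I would first translate $\gamma$ by a power of $\left(\begin{smallmatrix}1&1\\0&1\end{smallmatrix}\right)$ to normalize $c$ and $d$ and thereby lighten the bookkeeping.) This reconciliation of the quadratic symbols and the exponential sums with hypotheses (1)–(3) — in essence Newman's original computation, later streamlined by Ligozat — is the step I expect to be the main obstacle: every ingredient is elementary, but the parity split across the divisors together with the reciprocity manipulations have to be handled with care.

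For (II), fix a cusp $r=a/c$ with $\sigma_r=\left(\begin{smallmatrix}a&p\\c&s\end{smallmatrix}\right)\in\mathrm{SL}_2(\mathbb{Z})$. For each $\delta\mid N$, since $g_\delta:=\gcd(\delta,c)=\gcd(\delta a,c)$ divides the first column, I can write the determinant-$\delta$ matrix $\left(\begin{smallmatrix}\delta a&\delta p\\c&s\end{smallmatrix}\right)$ (which represents $\tau\mapsto\delta\,\sigma_r\tau$) as $\sigma_\delta'\left(\begin{smallmatrix}g_\delta&u_\delta\\0&\delta/g_\delta\end{smallmatrix}\right)$ with $\sigma_\delta'\in\mathrm{SL}_2(\mathbb{Z})$ and $u_\delta\in\mathbb{Z}$. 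A brief matrix computation shows that the automorphy factor coming from $\eta(\delta\sigma_r\tau)=\eta\!\left(\sigma_\delta'\!\left(\tfrac{g_\delta^2}{\delta}\tau+\tfrac{g_\delta u_\delta}{\delta}\right)\right)$ is $(\delta/g_\delta)^{-1/2}(c\tau+s)^{1/2}$, whose $\tau$-dependence is the \emph{same} for every $\delta$; hence, multiplying over $\delta$, these factors cancel by hypothesis (4) up to a nonzero constant, and
\[
f(\sigma_r\tau)=(\text{nonzero constant})\cdot\prod_{\delta\mid N}\eta\!\left(\tfrac{g_\delta^2}{\delta}\tau+\tfrac{g_\delta u_\delta}{\delta}\right)^{m_\delta}=(\text{nonzero constant})\cdot q_r^{\,\kappa}\,\bigl(1+O(q_r)\bigr),
\]
where, using $w_r=N/\gcd(c^2,N)$,
\[
\kappa=\frac{N}{24\,\gcd(c^2,N)}\sum_{\delta\mid N}\frac{\gcd(c,\delta)^2}{\delta}\,m_\delta .
\]
Since $\kappa$ is finite, only finitely many negative-index terms occur, and together with the periodicity noted at the outset this shows $f$ is meromorphic at $r$ (indeed $\operatorname{Ord}(f,r,\Gamma_0(N))=\kappa$). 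Combining with (I), $f$ is a modular function on $\Gamma_0(N)$, as required.
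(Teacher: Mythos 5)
The paper does not actually prove this theorem: it is quoted from Newman's paper and used as a black box, so there is no in-house argument to compare yours against. That said, your outline is the standard proof and is structurally sound. The reduction of invariance to the multiplier computation via $\delta\gamma\tau=\gamma_\delta(\delta\tau)$ with $\gamma_\delta=\left(\begin{smallmatrix}a&b\delta\\ c/\delta&d\end{smallmatrix}\right)$ is exactly right (and legitimately uses $\delta\mid N\mid c$); condition (4) does kill the weight factor; the observation that $\chi$ is a $\mu_{24}$-valued homomorphism (via $f^{24}=\prod_\delta\Delta(\delta\tau)^{m_\delta}$) is a genuine simplification; and your cusp computation correctly reproduces the order formula that the paper separately quotes as Ligozat's theorem, with $w_r=N/\gcd(c^2,N)$ and the automorphy factors $(g_\delta/\delta)^{1/2}(c\tau+s)^{1/2}$ cancelling by (4). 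The hypotheses are also matched to the right jobs: in the exponent of the root of unity, the terms $(a+d)(c/\delta)$, $bd\,c^2/\delta$ and $3cd/\delta$ are multiples of $(c/N)\sum_\delta(N/\delta)m_\delta$ and hence die by (2), the $bd\,\delta$ term dies by (1), the $3(d-1)$ term by (4), and the Jacobi symbols collapse to a symbol with numerator $\prod_\delta\delta^{m_\delta}$, which is $+1$ by (3).

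The one substantive caveat is that the crux --- the verification that $\prod_\delta\nu_\eta(\gamma_\delta)^{m_\delta}=1$ --- is described rather than carried out: you say what the symbol part and the root-of-unity part \emph{should} collapse to, but the parity split over the divisors $\delta$ (since $c/\delta$ can be odd for some $\delta$ and even for others, forcing different branches of the $\nu_\eta$ formula, with reciprocity needed to reconcile $\left(\tfrac{d}{c/\delta}\right)$ against $\left(\tfrac{c/\delta}{d}\right)$) is precisely where Newman's and Ligozat's work lives. As a proof proposal this is acceptable --- every intermediate claim you make about what that computation yields is correct --- but a complete write-up would need that bookkeeping done explicitly, ideally exploiting your own remark that it suffices to check $\chi=1$ on generators of $\Gamma_0(N)$ or on matrices normalized by translations.
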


\begin{theorem}[Ligozat~\cite{Ligozat}]\label{thm:Ligozat}
Let \(N\) be a positive integer, and consider the eta-quotient
\[
f(\tau)=\prod_{\delta\mid N}\eta(\delta \tau)^{m_\delta},
\qquad m_\delta\in\mathbb{Z}.
\]
Fix a cusp represented by the fraction \(r=\frac{c}{d}\) with \(\gcd(c,d)=1\) and \(d\mid N\).  Then the order of \(f\) at the cusp \(r\) is
\[
\operatorname{Ord}(f,r,\Gamma_0(N))=\frac{N}{24}
\sum_{\delta\mid N}
\frac{\gcd(d,\delta)^{2}m_{\delta}}{\gcd(d,\,N/d)d\delta}.
\]
\end{theorem}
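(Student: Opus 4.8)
The plan is to compute the $q_r$-expansion of $f$ at the cusp $r=\tfrac{c}{d}$ directly from the eta transformation law and read off the exponent of its leading coefficient. First I would record the relevant cusp width: since $\gcd(c,d)=1$ and $d\mid N$, a short computation with the defining condition for $w_r$ (conjugating $\begin{pmatrix}1&w\\0&1\end{pmatrix}$ by $\sigma_r$ produces the lower-left entry $-wd^2$, which must be divisible by $N$) gives $w_r=N/\gcd(d^{2},N)=N/\bigl(d\,\gcd(d,N/d)\bigr)$. Then fix $\sigma_r=\begin{pmatrix} c & b \\ d & e\end{pmatrix}\in\mathrm{SL}_2(\mathbb{Z})$ with $ce-bd=1$, so $\sigma_r\infty=r$, and set $q_r=e^{2\pi i\tau/w_r}$ as in the definition of the order.

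The core of the argument is to understand each factor $\eta(\delta\tau)$ under $\tau\mapsto\sigma_r\tau$. Observe that $\eta(\delta\sigma_r\tau)=\eta(M_\delta\tau)$, where $M_\delta=\begin{pmatrix}\delta&0\\0&1\end{pmatrix}\sigma_r=\begin{pmatrix}\delta c&\delta b\\ d& e\end{pmatrix}$ has determinant $\delta$. I would factor $M_\delta=A_\delta B_\delta$ with $A_\delta\in\mathrm{SL}_2(\mathbb{Z})$ and $B_\delta=\begin{pmatrix} a_\delta & h_\delta\\ 0 & c_\delta\end{pmatrix}$ upper triangular with $a_\delta,c_\delta>0$ and $a_\delta c_\delta=\delta$; this is just the Hermite normal form of $M_\delta$ under row operations by $\mathrm{SL}_2(\mathbb{Z})$. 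The top-left entry equals the gcd of the first column, so $a_\delta=\gcd(\delta c,d)=\gcd(\delta,d)$, the last equality holding because $\gcd(c,d)=1$; hence also $c_\delta=\delta/\gcd(\delta,d)$.

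Now apply the eta transformation law to $A_\delta$ at the point $B_\delta\tau$:
\[
\eta(\delta\sigma_r\tau)=\eta\bigl(A_\delta(B_\delta\tau)\bigr)=\nu_\eta(A_\delta)\,\bigl(c_{A_\delta}B_\delta\tau+d_{A_\delta}\bigr)^{1/2}\,\eta(B_\delta\tau),
\]
and expand $\eta(B_\delta\tau)=e^{\pi i(a_\delta\tau+h_\delta)/(12c_\delta)}\prod_{k\ge1}\bigl(1-e^{2\pi i k(a_\delta\tau+h_\delta)/c_\delta}\bigr)$. As $\tau\to i\infty$ the infinite product tends to $1$, the $\nu_\eta$ and automorphy factors are nonzero and contribute nothing to the order, and the leading exponential is $e^{2\pi i\tau\,a_\delta/(24c_\delta)}=q_r^{\,w_r a_\delta/(24c_\delta)}$. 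Forming the product of the $\delta$-factors with exponents $m_\delta$ recovers $f(\sigma_r\tau)$, which by modularity on $\Gamma_0(N)$ is a genuine Laurent series in $q_r$; since the leading coefficients just described are all nonzero they cannot cancel, so
\[
\operatorname{Ord}(f,r,\Gamma_0(N))=\sum_{\delta\mid N} m_\delta\,\frac{w_r a_\delta}{24\,c_\delta}.
\]

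Finally I would substitute $a_\delta=\gcd(d,\delta)$ and $c_\delta=\delta/\gcd(d,\delta)$, so that $a_\delta/c_\delta=\gcd(d,\delta)^{2}/\delta$, together with $w_r=N/\bigl(d\,\gcd(d,N/d)\bigr)$, and collect the factors to obtain exactly the stated formula. I expect the main obstacle to be the factorization step: exhibiting $A_\delta\in\mathrm{SL}_2(\mathbb{Z})$ and $B_\delta$ explicitly and justifying $a_\delta=\gcd(\delta,d)$ uniformly in $\delta$, together with the bookkeeping around the half-integral automorphy factors so that the notion of "order at the cusp" is unambiguous; the width computation and the closing arithmetic simplification are routine.
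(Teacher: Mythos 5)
The paper does not prove this statement: it is quoted as a classical result of Ligozat with a citation, so there is no internal proof to compare against. Your sketch is the standard derivation of Ligozat's formula and is essentially correct: the width computation $w_r=N/\gcd(d^2,N)=N/(d\gcd(d,N/d))$ is right, the Hermite factorization $\begin{pmatrix}\delta&0\\0&1\end{pmatrix}\sigma_r=A_\delta B_\delta$ with $a_\delta=\gcd(\delta c,d)=\gcd(\delta,d)$ and $c_\delta=\delta/\gcd(\delta,d)$ is the key step, and the closing arithmetic does reproduce the stated formula.

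One point you wave at too quickly: the claim that ``the $\nu_\eta$ and automorphy factors are nonzero and contribute nothing to the order.'' For a cusp $c/d$ with $d\neq 0$ one finds $c_{A_\delta}=d/\gcd(d,\delta)$, so the individual factor $(c_{A_\delta}B_\delta\tau+d_{A_\delta})^{1/2}\sim(\mathrm{const})\cdot\tau^{1/2}$ actually grows as $\tau\to i\infty$; each $\eta(\delta\sigma_r\tau)^{m_\delta}$ therefore carries a spurious $\tau^{m_\delta/2}$. These only disappear in the product because $\sum_\delta m_\delta=0$ (condition (4) of Theorem~\ref{thm:NewmanEtaQuotient}), whence $\prod_\delta(c_{A_\delta}B_\delta\tau+d_{A_\delta})^{m_\delta/2}$ tends to a nonzero constant; one then matches the resulting asymptotic $Cq_r^{\nu}(1+o(1))$ against the genuine $q_r$-Fourier expansion guaranteed by $\Gamma_0(N)$-invariance to conclude $\operatorname{Ord}(f,r,\Gamma_0(N))=\nu$. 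You flag this bookkeeping as an expected obstacle, so the gap is acknowledged rather than hidden, but as written the assertion is false factor-by-factor and true only for the weight-zero product. With that point filled in, the argument is complete and is the same one found in the standard references.
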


\begin{theorem}[Gordon-Hughes~\cite{GordonHughes}]\label{thm:GordonHughes}
Let \(\ell\) be a prime dividing \(N\), and let $f$ be a modular function for $\Gamma_0(\ell N)$.
Denote by \(\pi(n)\) the \(p\)-adic valuation of \(n\).  Let $r=\frac{c}{d}$ be a cusp of \(\Gamma_0(N)\), where \(d|N\) and \(\gcd(c,d)=1\).  Then $f|U_\ell$ is a modular function on $\Gamma_0(N)$, and
\[
\operatorname{Ord}\bigl(f|U_\ell,r,\Gamma_0(N)\bigr)\ge
\begin{cases}
\frac{1}{\ell}\,\operatorname{Ord}(f,r/\ell,\Gamma_0(\ell N)),
&\operatorname{if} \pi(d)\ge \frac12\,\pi(N),\\
\operatorname{Ord}(f,r/\ell,\Gamma_0(\ell N)),
&\operatorname{if } 0<\pi(d)<\frac12\,\pi(N),\\
\displaystyle\min_{0\le\lambda\le \ell-1}\operatorname{Ord}(f,(r+\lambda)/\ell,\Gamma_0(\ell N)),
&\operatorname{if } \pi(d)=0.
\end{cases}
\]
\end{theorem}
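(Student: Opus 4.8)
The plan is to study $f|U_\ell$ through its coset representatives and then track, cusp by cusp, the leading $q_r$-exponent of each resulting piece, using Hermite normal forms; the three regimes in the statement will drop out of elementary $\ell$-adic arithmetic of the cusp widths. Concretely, I would start from $(f|U_\ell)(\tau)=\tfrac1\ell\sum_{j=0}^{\ell-1}f\!\left(\tfrac{\tau+j}{\ell}\right)=\tfrac1\ell\sum_{j=0}^{\ell-1}f(M_j\tau)$ with $M_j=\begin{pmatrix}1&j\\0&\ell\end{pmatrix}$. For $\gamma\in\Gamma_0(N)$ with top-left entry $a$ one has $\gcd(a,N)=1$, hence $\gcd(a,\ell)=1$ since $\ell\mid N$; using this one checks that $M_j\gamma=\gamma'_jM_{j'}$ with $\gamma'_j\in\Gamma_0(\ell N)$ and $j\mapsto j'$ a permutation of $\mathbb{Z}/\ell\mathbb{Z}$. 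Since $f$ is $\Gamma_0(\ell N)$-invariant the sum is unchanged, so $f|U_\ell$ is $\Gamma_0(N)$-invariant; it is plainly meromorphic on $\mathbb{H}$ and at every cusp (a finite sum of functions that are), hence a modular function on $\Gamma_0(N)$, which is the first assertion.

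Next I would fix a cusp $r=c/d$ of $\Gamma_0(N)$ with $d\mid N$, $\gcd(c,d)=1$, width $w=w_N(r)=N/(d\gcd(d,N/d))$, and pick $\sigma_r=\begin{pmatrix}c&\ast\\ d&\ast\end{pmatrix}\in\mathrm{SL}_2(\mathbb{Z})$, so that $(f|U_\ell)(\sigma_r\tau)=\tfrac1\ell\sum_jf(B_j\tau)$ with $B_j=M_j\sigma_r$. Factor $B_j=\gamma_jT_j$ with $\gamma_j\in\mathrm{SL}_2(\mathbb{Z})$ and $T_j=\begin{pmatrix}\alpha_j&\beta_j\\ 0&\delta_j\end{pmatrix}$ in Hermite normal form, $\alpha_j\delta_j=\ell$. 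A direct computation gives $\gamma_j\infty=B_j\infty=\tfrac{c+jd}{\ell d}$, and since $\gcd(c+jd,d)=1$ this reduces to a fraction with denominator $\ell d$ when $\ell\nmid c+jd$ (then $\alpha_j=1$, $\delta_j=\ell$) and denominator $d$ when $\ell\mid c+jd$ (then $\alpha_j=\ell$, $\delta_j=1$). The second case never occurs if $\ell\mid d$; if $\ell\nmid d$ it occurs for exactly one index $j_0$. In all cases the $\ell$ points $\gamma_j\infty$ are precisely the cusps $(r+\lambda)/\ell$, $\lambda=0,\dots,\ell-1$, of $\Gamma_0(\ell N)$.

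Writing $w'_j=w_{\ell N}(\gamma_j\infty)$ and $\nu_j=\operatorname{Ord}(f,\gamma_j\infty,\Gamma_0(\ell N))$, expanding $f$ at $\gamma_j\infty$ and substituting $T_j\tau=(\alpha_j\tau+\beta_j)/\delta_j$ shows that $f(B_j\tau)$ is a power series in $e^{2\pi i\alpha_j\tau/(\delta_jw'_j)}$ whose lowest term, expressed in $q_r=e^{2\pi i\tau/w}$, has exponent $\nu_j\,\alpha_jw/(\delta_jw'_j)$; hence
\[
\operatorname{Ord}(f|U_\ell,r,\Gamma_0(N))\ \ge\ \min_{0\le j\le\ell-1}\ \nu_j\,\frac{\alpha_jw}{\delta_jw'_j},
\]
cancellation among the finitely many series only raising the true order. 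It then remains to evaluate $\alpha_jw/(\delta_jw'_j)$ via $w'_j=\ell N/(D_j\gcd(D_j,\ell N/D_j))$, $D_j$ the reduced denominator of $\gamma_j\infty$, and compare $\ell$-adic valuations. When $\pi(d)\ge\tfrac12\pi(N)$ one gets $\gcd(\ell d,N/d)=\gcd(d,N/d)$, all $\gamma_j\infty$ are $\Gamma_0(\ell N)$-equivalent to $r/\ell$, and $\alpha_jw/(\delta_jw'_j)=1/\ell$, giving the bound $\tfrac1\ell\operatorname{Ord}(f,r/\ell,\Gamma_0(\ell N))$. When $0<\pi(d)<\tfrac12\pi(N)$ one gets $\gcd(\ell d,N/d)=\ell\gcd(d,N/d)$, so $w'_j=w/\ell$ and $\alpha_jw/(\delta_jw'_j)=1$, giving $\min_j\operatorname{Ord}(f,(c+jd)/(\ell d),\Gamma_0(\ell N))$; those cusps all share the denominator $\ell d$, and for an eta-quotient the order depends only on the denominator (Theorem~\ref{thm:Ligozat}), so this equals $\operatorname{Ord}(f,r/\ell,\Gamma_0(\ell N))$. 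When $\pi(d)=0$, the unique term with $\delta_{j_0}=1$ has $w'_{j_0}=\ell w$ and the remaining $\ell-1$ terms have $w'_j=w/\ell$, so every term contributes exponent $\nu_j$, giving $\min_{0\le\lambda\le\ell-1}\operatorname{Ord}(f,(r+\lambda)/\ell,\Gamma_0(\ell N))$.

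The conceptual skeleton above is routine; the real work, and the main obstacle, lies in the last step: the careful bookkeeping of cusp widths, the $\ell$-adic valuation inequalities that separate the three cases, and the two cusp-theoretic inputs used along the way — that all $\gamma_j\infty$ are $\Gamma_0(\ell N)$-equivalent to $r/\ell$ when $\pi(d)\ge\tfrac12\pi(N)$, and that cusps sharing a denominator have equal order (for eta-quotients, via Theorem~\ref{thm:Ligozat}). One must also observe that the a priori modularity of $f|U_\ell$ on $\Gamma_0(N)$ forces the individually fractional exponents $\nu_j\alpha_jw/(\delta_jw'_j)$ to recombine into an honest integral $q_r$-expansion, which is precisely what legitimizes the displayed lower bounds.
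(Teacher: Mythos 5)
The paper offers no proof of this statement: it is quoted verbatim from Gordon--Hughes and used as a black box, so there is nothing internal to compare against. Judged on its own, your argument is the standard (and correct) one: the coset computation $M_j\gamma=\gamma_j'M_{j'}$ with $\gamma_j'\in\Gamma_0(\ell N)$ does give $\Gamma_0(N)$-invariance of $f|U_\ell$; the Hermite factorization $M_j\sigma_r=\gamma_jT_j$, the identification $\gamma_j\infty=(c+jd)/(\ell d)$, the criterion $\alpha_j=\gcd(c+jd,\ell)$, and the width bookkeeping $w=N/(d\gcd(d,N/d))$, $w_j'=\ell N/(D_j\gcd(D_j,\ell N/D_j))$ all check out, and the three regimes fall out of the $\ell$-adic comparison of $\gcd(\ell d,N/d)$ with $\gcd(d,N/d)$ exactly as you describe. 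Cases one and three are complete as written.

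The one point that deserves emphasis is the middle case, and you have in fact put your finger on a real issue with the statement rather than with your proof. When $0<\pi(d)<\tfrac12\pi(N)$ your argument honestly yields the bound $\min_{0\le\lambda\le\ell-1}\operatorname{Ord}\bigl(f,(r+\lambda)/\ell,\Gamma_0(\ell N)\bigr)$, and the cusps $(c+\lambda d)/(\ell d)$ are genuinely \emph{inequivalent} on $\Gamma_0(\ell N)$ in this regime (e.g.\ $1/4\not\sim 3/4$ on $\Gamma_0(16)$, which is the relevant instance for $N=8$, $\ell=2$, $r=1/2$ in this paper). For an arbitrary modular function on $\Gamma_0(\ell N)$ these orders can differ, so the displayed bound $\operatorname{Ord}(f,r/\ell,\Gamma_0(\ell N))$ does not follow without the extra input you supply: that the order depends only on the cusp denominator, which Theorem~\ref{thm:Ligozat} guarantees for eta-quotients (the only functions to which the paper ever applies the lemma). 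You should state this restriction as a hypothesis (or replace the middle case by the minimum over $\lambda$) rather than leave it as an aside; with that emendation your proof is complete in the generality actually needed.
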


For $f(\tau)=\sum_{n=0}^\infty a(n)q^n$, the Atkin $U_p$ operator is defined as
\[
f | U_p=\sum_{n=0}^\infty a(p n)q^{n}.
\]

\section{Main Results}
\label{sec:main_results}

The main result we want to prove is
\begin{theorem}\label{thm:OVPinf}
Let $\alpha\ge1$, and $\ell$ be an odd prime. Then the following is true for all $n\ge0$
\begin{equation}\label{OVPinf}
\ell^3 \overline{p}(2^\alpha\ell^2 n)+\ell\left(\dfrac{-2^\alpha n}{\ell}\right)\overline{p}(2^\alpha n)+\overline{p}\left(\dfrac{2^\alpha n}{\ell^2}\right)\equiv (\ell^3+1)\overline{p}(2^\alpha n)\pmod{2^{\alpha+12}}.
\end{equation}
\end{theorem}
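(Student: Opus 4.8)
The plan is to induct on $\alpha$, with the base case $\alpha=0$ being precisely the congruence \eqref{OVP} of Garvan and Morrow, and to pass from level $2^{\alpha}$ to level $2^{\alpha+1}$ by applying Atkin's $U_2$ operator. The essential observation is that \eqref{OVPinf} is equivalent, after summing against $q^n$, to a congruence between modular functions: writing $F_\alpha(\tau)=\sum_{n\ge0}\overline{p}(2^\alpha n)q^n$, equation \eqref{OVPinf} asserts that a certain Hecke-like combination of $F_\alpha$ vanishes modulo $2^{\alpha+12}$. Since the overpartition generating function is the eta-quotient $\varphi(\tau)=\eta(2\tau)/\eta(\tau)^2$, each $F_\alpha$ is $(U_2^{\alpha}\varphi)$ up to normalization, and $F_\alpha$ is a modular function on $\Gamma_0(2^{\alpha+\epsilon})$ for a small fixed $\epsilon$; the relevant combination built from $F_\alpha$ is again modular on a group of that shape. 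The goal is then to show: if the level-$2^\alpha$ combination lies in $2^{\alpha+12}\mathbb{Z}[[q]]$, then applying $U_2$ and re-expressing in the level-$2^{\alpha+1}$ Hauptmodul produces a combination lying in $2^{\alpha+13}\mathbb{Z}[[q]]$, i.e.\ we gain exactly one power of $2$ at each step.

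Concretely I would proceed as follows. First, set up the two Hauptmoduln $G_2$ on $\Gamma_0(2)$ and $G_8$ on $\Gamma_0(8)$ explicitly as eta-quotients, verify via Theorems~\ref{thm:NewmanEtaQuotient} and~\ref{thm:Ligozat} that they are modular functions with the stated orders at all cusps, and express $\varphi$ and its $U_2$-iterates as Laurent polynomials in these Hauptmoduln with integer (or controlled $2$-adic) coefficients. Second, derive the modular equation relating $G_2$ and $G_8$ under $U_2$: writing $G_2$ in terms of $G_8$ and applying $U_2$ termwise, one obtains a polynomial identity whose coefficients must be computed once and for all. Third — and this is the heart of the argument — obtain lower bounds on the $2$-adic valuations of the coefficients appearing in the $U_2$-image of an arbitrary monomial $G_\bullet^{\,k}$; Theorem~\ref{thm:GordonHughes} controls the orders at cusps (ensuring the $U_2$-image is again a Laurent polynomial of bounded degree in the next Hauptmodul), while a separate $2$-adic estimate — propagated through the explicit modular equation — shows that each application of $U_2$ multiplies the relevant error term by a factor divisible by $2$. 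Fourth, combine these: starting from \eqref{OVP}, which gives the $2^{12}$ statement at $\alpha=0$, each $U_2$ step upgrades $2^{\alpha+12}$ to $2^{\alpha+13}$, and the inductive hypothesis closes.

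A few technical points must be handled carefully. The Jacobi symbol factor $\left(\frac{-2^\alpha n}{\ell}\right)$ interacts with $U_2$ in a nontrivial way, since replacing $n$ by $2n$ changes $\left(\frac{-2^\alpha n}{\ell}\right)$ to $\left(\frac{-2^{\alpha+1} n}{\ell}\right)$ — here one uses that $\left(\frac{2}{\ell}\right)=\pm1$ is a fixed sign, so the twisted term $\sum_n \left(\frac{-2^\alpha n}{\ell}\right)\overline{p}(2^\alpha n)q^n$ is, up to this sign and a shift, itself obtained by a $U_2$-type operation from a twist of $\varphi$; one must check that $U_2$ applied to the full Hecke combination at level $\alpha$ reproduces the Hecke combination at level $\alpha+1$ and not some unrelated linear combination. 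Likewise the term $\overline{p}(2^\alpha n/\ell^2)$, supported on $\ell^2\mid n$, behaves predictably under $U_2$ since $\gcd(2,\ell)=1$. The main obstacle I anticipate is the third step: establishing the uniform ``gain one power of $2$ per $U_2$'' estimate for the coefficients of the Hauptmodul expansions. This requires either an explicit enough modular equation between $G_2$ and $G_8$ that the valuation bookkeeping can be done by hand, or a structural argument — e.g.\ showing the error term lives in a $\mathbb{Z}_2[U_2]$-stable module on which $U_2$ acts with a factor of $2$ — and getting the constant exactly right (so that the exponent is $\alpha+12$ and not $\alpha+c$ for some smaller $c$) will demand precisely the kind of cusp-order estimates provided by Theorem~\ref{thm:GordonHughes} together with the sharp $2^{12}$ input from \eqref{OVP}.
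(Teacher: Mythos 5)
Your overall strategy is the paper's: start from the Garvan--Morrow identity, iterate $U_2$, and control $2$-adic valuations of the error term via expansions in the Hauptmoduln $G_2$ and $G_8$. But as written the inductive step contains a genuine gap. You propose to induct on the \emph{congruence}: ``if the level-$2^\alpha$ combination lies in $2^{\alpha+12}\mathbb{Z}[[q]]$, then applying $U_2$ produces a combination lying in $2^{\alpha+13}\mathbb{Z}[[q]]$.'' That implication is false for an arbitrary element of $2^{\alpha+12}\mathbb{Z}[[q]]$: the operator $U_2$ merely extracts even-indexed coefficients and never increases $2$-adic valuation by itself. The gain of one power of $2$ per step is a property of the \emph{specific} error term, so you must induct on the exact identity \eqref{OVPNewman} (not on the congruence \eqref{OVP}), carrying along a structured representation of the error as $\Phi_2$ times a polynomial in $G_8$ with coefficients of controlled valuation. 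Moreover, a valuation bound depending only on the iteration count would not close the induction: the paper's inductive invariant (Theorem~\ref{FG2Upowers}, Lemma~\ref{G8Upowers}, Theorem~\ref{Lval}) is $\pi_2(c_{\alpha j}^s)\ge 2\alpha+2j-s-2$, which depends on the power $j$ of $G_8$ as well as on $\alpha$ and $s$. Since $U_2$ roughly halves the minimal power of $G_8$ present, it is precisely the surplus valuation carried by high powers of $G_8$ (the $2j$ term) that is converted into the per-step gain. Your remark about a ``$\mathbb{Z}_2[U_2]$-stable module on which $U_2$ acts with a factor of $2$'' is the right intuition, but exhibiting that module and its valuation filtration is the actual content of the proof, and your proposal leaves it as an acknowledged obstacle rather than resolving it.

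Two smaller points. First, the Jacobi-symbol difficulty you flag is a non-issue: the left-hand side of \eqref{OVPNewman} is a single $q$-series whose $n$-th coefficient is the entire Hecke combination, so applying $U_2^\alpha$ replaces $n$ by $2^\alpha n$ throughout and automatically yields the combination in \eqref{OVPinf}; no separate analysis of the twisted term is needed. Second, the computational device that makes the $U_2$-images tractable in the paper --- using $f(q)|U_2=f(-q)|U_2$ so that $\Phi_2(-q)$ and $G_2(-q)$ become explicit eta-quotients on $\Gamma_0(8)$ and $\Gamma_0(16)$, combined with the modular equations \eqref{modeq1}, \eqref{modeq2}, \eqref{modeq3} --- is absent from your outline; without some such device, the ``explicit enough modular equation'' you hope to compute is difficult to produce.
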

Define
\[
\varphi_2(\tau)=\dfrac{\eta(2\tau)}{\eta(\tau)^2}=\Phi_2(q),
\]
and
\begin{equation}\label{HauptmodulGamma_0_2}
g_2(\tau)=\left(\dfrac{\eta(2\tau)}{\eta(\tau)}\right)^{24}=G_2(q).
\end{equation}
By Theorem~\ref{thm:NewmanEtaQuotient}, (\ref{HauptmodulGamma_0_2}) is a modular function on $\Gamma_0(2)$. Its only pole is a simple pole at the cusp zero for $\Gamma_0(2)$. We need the following
\begin{theorem}
Let $\ell$ be an odd prime. Then
\begin{multline}\label{OVPNewman}
\sum_{n=0}^\infty \left(\ell^3 \overline{p}(\ell^2 n)+\ell\left(\dfrac{-n}{\ell}\right)\overline{p}(n)+\overline{p}\left(\dfrac{n}{\ell^2}\right)\right)q^n=(\ell^3+1)\sum_{n=0}^\infty\overline{p}(n)q^n\\
+\sum_{s=1}^{(\ell^2-1)/8}a(j,\ell)2^{12s}\Phi_2(q)G_2^s(q),
\end{multline}
where $a(j,\ell)$ are integers.
\end{theorem}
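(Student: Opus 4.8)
\emph{Sketch of a proof.}
The plan is to recognise the left-hand side of \eqref{OVPNewman} as the image of $\varphi_2$ under the weight $-\tfrac12$ Hecke operator at $\ell$, and then to trade it for a polynomial in $G_2$ whose coefficients can be controlled $2$-adically through the Fricke involution. Writing $\varphi_2(\tau)=\sum_{n\ge0}\overline p(n)q^n$ — a weakly holomorphic modular form of weight $-\tfrac12$ on $\Gamma_0(2)$ with the eta-multiplier, holomorphic on $\mathbb H$, holomorphic at $\infty$, and with a single pole, at the cusp $0$, of order $\tfrac18$ in the local parameter (Theorem~\ref{thm:Ligozat}) — one checks from the half-integral weight coefficient formula that the $n$-th coefficient of $\ell^{3}\bigl(\varphi_2|T_{\ell^2}\bigr)$ is exactly $\ell^{3}\overline p(\ell^{2}n)+\ell\bigl(\tfrac{-n}{\ell}\bigr)\overline p(n)+\overline p(n/\ell^{2})$, where $T_{\ell^2}=U_{\ell^2}+\ell^{-2}T_\chi+\ell^{-3}V_{\ell^2}$ and $T_\chi$ is the twist by $\bigl(\tfrac{-n}{\ell}\bigr)$. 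Call this series $F$. Since $T_{\ell^2}$, with $\ell$ coprime to the level, preserves weakly holomorphic forms, $F$ is again of weight $-\tfrac12$ on $\Gamma_0(2)$ with the same multiplier, holomorphic on $\mathbb H$, holomorphic at $\infty$ with constant term $\ell^{3}+1$ (as $\overline p(0)=1$ and $(\tfrac0\ell)=0$), and with a pole only at $0$, now of order $\tfrac{\ell^2}{8}$ — the factor $\ell^2$ following from a Gordon--Hughes type estimate (Theorem~\ref{thm:GordonHughes}), or from the Fricke computation below.

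Now set $h:=F/\varphi_2-(\ell^{3}+1)$. Since $\varphi_2=\eta(2\tau)/\eta(\tau)^2$ is holomorphic and non-vanishing on $\mathbb H$, and the multipliers of $F$ and $\varphi_2$ cancel, $h$ is a modular function on $\Gamma_0(2)$ that is holomorphic on $\mathbb H$, vanishes at $\infty$ (the $q^0$-coefficients of $F$ and $(\ell^3+1)\varphi_2$ agree), and has at the cusp $0$ a pole of order $\tfrac{\ell^2}{8}-\tfrac18=\tfrac{\ell^2-1}{8}$, an integer because $\ell$ is odd. As $G_2$ is a Hauptmodul for the genus-zero group $\Gamma_0(2)$ with a simple pole at $0$, holomorphic elsewhere, and $G_2=q+O(q^2)$, it follows that $h=\sum_{s=1}^{(\ell^2-1)/8}c_s G_2^s$ for some constants $c_s$, with no constant term. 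These $c_s$ are integers: the coefficients of $F-(\ell^3+1)\varphi_2$ are integers, $\varphi_2\in1+q\mathbb Z[[q]]$ is a unit in $\mathbb Z[[q]]$, and the transition between $\{q^s\}$ and $\{G_2^s\}$ is unitriangular over $\mathbb Z$. Thus \eqref{OVPNewman} holds with $a(j,\ell):=c_j/2^{12j}$ as soon as we prove $v_2(c_s)\ge12s$, where $v_2$ is the $2$-adic valuation.

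This last estimate is the crux, and I would deduce it from the Fricke involution $W_2\colon\tau\mapsto-1/(2\tau)$ on $\Gamma_0(2)$. From $\eta(-1/\tau)=\sqrt{-i\tau}\,\eta(\tau)$ and $\eta(-1/(2\tau))=\sqrt{-2i\tau}\,\eta(2\tau)$ one obtains the classical relations $G_2|W_2=2^{-12}G_2^{-1}$ and $\varphi_2|W_2=\kappa\,\dfrac{\eta(\tau)}{\eta(2\tau)^2}$ for a fixed nonzero constant $\kappa$; the $2^{12}$ in the first of these is precisely where the power of $2$ in the theorem comes from. Since $\ell$ is coprime to the level, $T_{\ell^2}$ commutes with $W_2$, so $F|W_2=\ell^{3}\kappa\,\bigl(\tfrac{\eta(\tau)}{\eta(2\tau)^2}\bigr)\big|T_{\ell^2}$ and hence
\[
h|W_2=\frac{F|W_2}{\varphi_2|W_2}-(\ell^{3}+1)=\ell^{3}\,\frac{\bigl(\tfrac{\eta(\tau)}{\eta(2\tau)^2}\bigr)\big|T_{\ell^2}}{\eta(\tau)/\eta(2\tau)^2}-(\ell^{3}+1).
\]
Here $\dfrac{\eta(\tau)}{\eta(2\tau)^2}=q^{-1/8}\prod_{n\ge1}(1-q^n)(1-q^{2n})^{-2}$, whose $q^{1/8}$-multiple lies in $1+q\mathbb Z[[q]]$ and so is a unit; and $\ell^{3}T_{\ell^2}=\ell^{3}U_{\ell^2}+\ell\,T_\chi+V_{\ell^2}$ acts on $q$-expansions by an integer-coefficient formula whose $V_{\ell^2}$-component turns the polar term $q^{-1/8}$ into $q^{-\ell^2/8}=q^{-1/8}q^{-(\ell^2-1)/8}$ while the $U_{\ell^2}$- and $T_\chi$-components are less polar. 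Hence $\ell^{3}\bigl(\tfrac{\eta(\tau)}{\eta(2\tau)^2}\bigr)\big|T_{\ell^2}=q^{-1/8}w$ with $w\in q^{-(\ell^2-1)/8}\mathbb Z[[q]]$, and dividing by the unit $q^{1/8}\cdot\tfrac{\eta(\tau)}{\eta(2\tau)^2}$ gives $h|W_2+(\ell^{3}+1)\in q^{-(\ell^2-1)/8}\mathbb Z[[q]]$; in particular $h|W_2$ is an integral, hence $2$-integral, Laurent series in $q$.

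Finally compare the two descriptions of $h|W_2$: from $h=\sum_s c_s G_2^s$ and $G_2|W_2=2^{-12}G_2^{-1}$ we get $h|W_2=\sum_{s=1}^{(\ell^2-1)/8}c_s 2^{-12s}G_2^{-s}$, while $G_2^{-1}=(\eta(\tau)/\eta(2\tau))^{24}=q^{-1}\bigl(1+q\mathbb Z[[q]]\bigr)$, so $\{G_2^{-s}\}$ is unitriangular in $q^{-1}$ over $\mathbb Z$; matching this against the $2$-integrality just proved, from the most polar term downwards, forces $v_2\bigl(c_s2^{-12s}\bigr)\ge0$, i.e.\ $v_2(c_s)\ge12s$. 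Together with $c_s\in\mathbb Z$ this yields $a(j,\ell)=c_j/2^{12j}\in\mathbb Z$, proving \eqref{OVPNewman}. (Note that this argument in fact reproves the mod $2^{12}$ congruence \eqref{OVP} rather than using it.) The step I expect to demand the most care is the structural claim of the first two paragraphs — that $F$ genuinely descends to $\Gamma_0(2)$ with precisely the eta-multiplier of $\varphi_2$, is holomorphic at every cusp other than $0$, and has pole order exactly $\ell^2/8$ there; this is where the half-integral weight Hecke/Fricke formalism at level $2$ must be set up carefully, either directly or by first working at level $16$ and descending. The remaining ingredients — the eta-transformation laws under $W_2$, and the two unitriangularity arguments — are routine.
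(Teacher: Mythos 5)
The paper does not actually prove this theorem: its ``proof'' is a one-line deferral to the proof of Theorem~1.5 in Garvan--Morrow, so there is no in-paper argument to compare yours against line by line. Your reconstruction, however, is the standard Newman-style argument that underlies results of this shape, and its architecture is sound: identify the left side as $\ell^{3}\varphi_2|T_{\ell^2}$, observe that $h=F/\varphi_2-(\ell^3+1)$ is a modular function on the genus-zero group $\Gamma_0(2)$ holomorphic away from the cusp $0$ with pole order $(\ell^2-1)/8$ there, write $h=\sum c_sG_2^s$ with $c_s\in\mathbb{Z}$ by unitriangularity, and extract $v_2(c_s)\ge 12s$ from the Fricke relation $G_2|W_2=2^{-12}G_2^{-1}$ together with $2$-integrality of the expansion of $h|W_2$. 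That last identity is indeed the only natural source of the factor $2^{12s}$ in the statement, and your numerology checks out (e.g.\ for $\ell=3$ one finds $c_1=2^{12}$ exactly, so the bound is sharp). Your observation that the constant $\kappa$ in $\varphi_2|W_2=\kappa\,\eta(\tau)/\eta(2\tau)^2$ cancels in the quotient, and that $\ell^3T_{\ell^2}=\ell^3U_{\ell^2}+\ell T_\chi+V_{\ell^2}$ acts integrally on the (rescaled) expansion at $0$, are the right points to make.

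The one place where your sketch is genuinely incomplete --- and you flag it yourself --- is the half-integral-weight bookkeeping: $\varphi_2$ has weight $-\tfrac12$ with the eta multiplier on a group of level not divisible by $4$, so the Shimura formalism for $T_{\ell^2}$ and its commutation with $W_2$ does not apply verbatim; one must either rescale to level $16$ (where exponents become integral and the operator $V_8$ commutes with $T_{\ell^2}$ up to character) or set up the multiplier system directly. Any discrepancy in the commutation $[T_{\ell^2},W_2]$ in this setting is a factor of the form $\bigl(\tfrac{2}{\ell}\bigr)$ or a root of unity, which harms neither the integrality of $c_s$ nor the $2$-integrality of $h|W_2$, so the conclusion survives; but these verifications are precisely the content of the cited Garvan--Morrow proof and would need to be written out for your argument to stand alone.
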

\begin{proof}
The result follows from the proof of Theorem 1.5 given in Section 3 of~\cite{GarvanMorrow}.
\end{proof}

We prove Theorem~\ref{thm:OVPinf} by repeatedly applying the Atkin $U_2$ operator to (\ref{OVPNewman}). However, obtaining an explicit formula for $\Phi_2(q)G_2^s(q)|U_2^\alpha$ is difficult. Instead, observe that
\[
\Phi_2(q)G_2^s(q)|U_2=\Phi_2(-q)G_2^s(-q)|U_2.
\]
It turns out that finding a formula for $\Phi_2(-q)G_2^s(-q)|U_2$ is much easier. Hence, in what follows we develop an explicit formula for $\Phi_2(-q)G_2^s(-q)|U_2^\alpha$.
\begin{lemma}
\begin{equation}\label{etanegative}
\eta(2\tau+1)=e^{\pi i/12}\eta(2\tau),\qquad\eta\left(\tau+\dfrac12\right)=e^{\pi i/24}\dfrac{\eta^3(2\tau)}{\eta(\tau)\eta(4\tau)}.
\end{equation}
\end{lemma}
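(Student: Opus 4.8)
The two identities in \eqref{etanegative} are classical, and the plan is to derive each of them directly from the eta transformation law recorded above, applied to a suitable matrix in $\mathrm{SL}_2(\mathbb{Z})$, together with the product definition of $\eta$.

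For the first identity, I would simply use the product expansion: $\eta(2\tau+1)=e^{\pi i(2\tau+1)/12}\prod_{n\ge1}(1-e^{2\pi i n(2\tau+1)})$. Since $e^{2\pi i n(2\tau+1)}=e^{2\pi i n\cdot 2\tau}e^{2\pi i n}=e^{2\pi i n\cdot 2\tau}$, the infinite product is unchanged and only the prefactor picks up $e^{\pi i/12}$, giving $\eta(2\tau+1)=e^{\pi i/12}\eta(2\tau)$. No modular transformation is needed here.

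For the second identity I would work with $\eta\!\left(\tau+\tfrac12\right)$ by writing $\tau+\tfrac12=\dfrac{2\tau+1}{2}=\gamma(2\tau)$ where $\gamma=\begin{pmatrix}1&1\\2&0\end{pmatrix}$; this has determinant $-2$, so to use the $\mathrm{SL}_2(\mathbb{Z})$ transformation law I would instead choose an honest element of $\mathrm{SL}_2(\mathbb{Z})$, e.g.\ $\gamma=\begin{pmatrix}1&1\\2&1\end{pmatrix}$ (with $c=2$ even), note that $\gamma\tau=\dfrac{\tau+1}{2\tau+1}$, and relate $\eta\!\left(\tau+\tfrac12\right)$ to $\eta$ at $\gamma$-translates by a short chain of substitutions. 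Concretely, the cleanest route is to compare the eta-quotient $\dfrac{\eta^3(2\tau)}{\eta(\tau)\eta(4\tau)}$ with $e^{-\pi i/24}\eta\!\left(\tau+\tfrac12\right)$: both are holomorphic and nonvanishing on $\mathbb{H}$, both have the same leading $q$-expansion (a direct product computation: $\eta(\tau+\tfrac12)=e^{\pi i(\tau+1/2)/12}\prod(1-(-1)^n q^n)$ with $q=e^{2\pi i\tau}$, and $\prod(1-(-1)^nq^n)=\dfrac{\prod(1-q^{2n})^3}{\prod(1-q^n)\prod(1-q^{4n})}$ by the standard manipulation separating even and odd $n$), so the two sides agree after matching the elementary prefactors $e^{\pi i(\tau+1/2)/12}$ against $e^{\pi i/24}\cdot e^{\pi i(2\tau)\cdot3/12-\pi i\tau/12-\pi i(4\tau)/12}=e^{\pi i/24}$. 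Thus the identity reduces to the purely formal power-series identity $\prod_{n\ge1}(1-(-1)^n q^n)=\prod_{n\ge1}(1-q^{2n})^3\big/\bigl(\prod_{n\ge1}(1-q^n)\prod_{n\ge1}(1-q^{4n})\bigr)$, which one checks by splitting the left product over even and odd indices.

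The only mild obstacle is bookkeeping of the $24$th roots of unity in the eta prefactors; this is routine but error-prone, so I would double-check the constant by evaluating the $q^0$ coefficient on both sides. No deep input is required: the first identity is immediate, and the second is the combination of an elementary $q$-product identity with the normalization constant in $\eta$.
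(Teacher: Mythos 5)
Your proposal is correct and follows essentially the same route as the paper: the first identity via the product definition (the infinite product is invariant under $\tau\mapsto\tau+1$ in the argument $2\tau$, leaving only the prefactor $e^{\pi i/12}$), and the second by reducing to the elementary $q$-product identity $\prod_{n\ge1}(1-(-1)^nq^n)=\prod_{n\ge1}(1-q^{2n})^3/\bigl(\prod_{n\ge1}(1-q^n)\prod_{n\ge1}(1-q^{4n})\bigr)$ together with matching the $e^{\pi i/24}$ prefactor. The brief detour through the modular transformation law is unnecessary, as you yourself conclude; the paper dispenses with it entirely.
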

\begin{proof}
The first formula follows directly from the definition of the eta function. For the second formula,
\begin{align}
\eta\left(\tau+\dfrac12\right)&=e^{\pi i/24}e^{\pi i \tau/12}\prod_{n=1}^\infty\left(1-e^{\pi in}e^{2\pi i n\tau}\right)=e^{\pi i/24}q^{1/24}\prod_{n=1}^\infty\left(1-(-1)^nq^n\right)\nonumber \\
&=e^{\pi i/24}q^{1/24}\prod_{n=1}^\infty\left(1+q^{2n-1}\right)\left(1-q^{2n}\right)=e^{\pi i/24}q^{1/24}\prod_{n=1}^\infty\dfrac{\left(1+q^{n}\right)\left(1-q^{2n}\right)}{1+q^{2n}}.\label{eq:tau12}
\end{align}
Next, note that
\[
\prod_{n=1}^\infty(1+q^n)=\prod_{n=1}^\infty\dfrac{1-q^{2n}}{1-q^n},\qquad\prod_{n=1}^\infty(1+q^{2n})=\prod_{n=1}^\infty\dfrac{1-q^{4n}}{1-q^{2n}}.
\]
Thus, we can rewrite (\ref{eq:tau12}) as
\[
\eta\left(\tau+\dfrac12\right)=e^{\pi i/24}q^{1/24}\prod_{n=1}^\infty\dfrac{(1-q^{2n})^3}{(1-q^n)(1-q^{4n})}=e^{\pi i/24}\dfrac{\eta^3(2\tau)}{\eta(\tau)\eta(4\tau)}.
\]
\end{proof}
\begin{lemma}\label{PhiGnegative}
\begin{equation}
\Phi_2(-q)=\dfrac{\eta^2(\tau)\eta^2(4\tau)}{\eta^5(2\tau)},\qquad G_2(-q)=-\dfrac{\eta^{24}(\tau)\eta^{24}(4\tau)}{\eta^{48}(2\tau)}
\end{equation}
\end{lemma}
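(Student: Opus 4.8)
The plan is to reduce everything to the substitution $q\mapsto -q$, which at the level of $\tau$ amounts to $\tau\mapsto\tau+\tfrac12$, since $e^{2\pi i(\tau+1/2)}=-e^{2\pi i\tau}=-q$. Thus $\Phi_2(-q)=\varphi_2(\tau+\tfrac12)=\eta(2\tau+1)/\eta(\tau+\tfrac12)^2$ and $G_2(-q)=g_2(\tau+\tfrac12)=\bigl(\eta(2\tau+1)/\eta(\tau+\tfrac12)\bigr)^{24}$, directly from the definitions of $\varphi_2$ and $g_2$. So the lemma becomes a matter of plugging in the two formulas of the previous lemma, equation~(\ref{etanegative}).

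First I would handle $\Phi_2(-q)$. Squaring the second identity in~(\ref{etanegative}) gives $\eta(\tau+\tfrac12)^2=e^{\pi i/12}\,\eta^6(2\tau)/\bigl(\eta^2(\tau)\eta^4(4\tau)\bigr)$—more precisely $e^{\pi i/12}\eta^6(2\tau)\eta^{-2}(\tau)\eta^{-2}(4\tau)$—and the first identity gives $\eta(2\tau+1)=e^{\pi i/12}\eta(2\tau)$. Dividing, the two factors $e^{\pi i/12}$ cancel exactly, leaving $\Phi_2(-q)=\eta(2\tau)\eta^2(\tau)\eta^2(4\tau)/\eta^6(2\tau)=\eta^2(\tau)\eta^2(4\tau)/\eta^5(2\tau)$, which is the first claimed formula.

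Next I would do $G_2(-q)$. Using the same two substitutions, $\eta(2\tau+1)/\eta(\tau+\tfrac12)=e^{\pi i/12}\eta(2\tau)\cdot e^{-\pi i/24}\eta(\tau)\eta(4\tau)/\eta^3(2\tau)=e^{\pi i/24}\,\eta(\tau)\eta(4\tau)/\eta^2(2\tau)$. Raising to the $24$th power, the phase contributes $e^{24\cdot\pi i/24}=e^{\pi i}=-1$, which is exactly the sign appearing in the statement, and the eta part becomes $\eta^{24}(\tau)\eta^{24}(4\tau)/\eta^{48}(2\tau)$. Hence $G_2(-q)=-\eta^{24}(\tau)\eta^{24}(4\tau)/\eta^{48}(2\tau)$.

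There is essentially no obstacle here beyond careful bookkeeping of the roots of unity $e^{\pi i/12}$ and $e^{\pi i/24}$: the only substantive point is to check that they cancel cleanly in the $\Phi_2$ computation and combine to give precisely $-1$ after the $24$th power in the $G_2$ computation. Everything else is formal manipulation of eta-quotients, so the proof will be short.
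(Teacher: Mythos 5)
Your proof is correct and follows the same route as the paper: substitute $\tau\mapsto\tau+\tfrac12$ and apply the two identities of~(\ref{etanegative}), tracking the roots of unity. The paper's own proof is just a terser version of the same computation (and note the small slip $\eta^4(4\tau)$ in your squared formula, which you immediately correct to $\eta^2(4\tau)$).
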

\begin{proof}
Since $q=e^{2\pi i \tau}$, we have $-q=e^{2\pi i(\tau+1/2))}$. Thus,
\[
\Phi_2(-q)=\dfrac{\eta(2\tau+1)}{\eta^2(\tau+1/2)}=\dfrac{\eta^2(\tau)\eta^2(4\tau)}{\eta^5(2\tau)},
\]
where the last equality follows from~(\ref{etanegative}). Similarly,
\[
G_2(-q)=\dfrac{\eta^{24}(2\tau+1)}{\eta^{24}(\tau+1/2)}=-\dfrac{\eta^{24}(\tau)\eta^{24}(4\tau)}{\eta^{48}(2\tau)}.
\]
\end{proof}
Let
\[
g_8(\tau)=\dfrac{\eta^4(8\tau)\eta^2(2\tau)}{\eta^2(4\tau)\eta^4(\tau)}=G_8(q).
\]
By Theorem~\ref{thm:NewmanEtaQuotient} $G_8(q)$ is a modular function on $\Gamma_0(8)$, and by Theorem~\ref{thm:Ligozat}, it has the only pole at the cusp zero for $\Gamma_0(8)$ of order one, and therefore is a Hauptmodul on $\Gamma_0(8)$.

We will need the following modular equations
\begin{lemma}
\begin{equation}
G_2^2(q)=G_2(q^2)+G_2(q)(48G_2(q^2)+4096G_2^2(q^2))\label{modeq1}
\end{equation}
\end{lemma}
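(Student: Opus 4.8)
The plan is to prove the modular equation \eqref{modeq1} by exhibiting both sides as modular functions on $\Gamma_0(4)$ and then comparing finitely many $q$-expansion coefficients. First I would note that $G_2(q)=(\eta(2\tau)/\eta(\tau))^{24}$ is a modular function on $\Gamma_0(2)$, so $G_2(q^2)$, which corresponds to $g_2(2\tau)=(\eta(4\tau)/\eta(2\tau))^{24}$, is a modular function on $\Gamma_0(4)$; likewise $G_2(q)$ itself, viewed on the larger group, is a modular function on $\Gamma_0(4)$. Hence every term appearing in \eqref{modeq1} — namely $G_2^2(q)$, $G_2(q^2)$, $G_2(q)G_2(q^2)$, and $G_2(q)G_2^2(q^2)$ — is a modular function on $\Gamma_0(4)$, and so is the difference $\Delta(\tau)$ of the two sides. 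It therefore suffices to show $\Delta\equiv 0$.

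The second step is the pole analysis. The group $\Gamma_0(4)$ has three cusps, represented by $\infty$, $0$, and $1/2$, with widths $1$, $4$, and $1$ respectively. Using Theorem~\ref{thm:Ligozat} I would compute the order of each eta-quotient $g_2(\tau)$, $g_2(2\tau)$ at each of these cusps. Both $g_2(\tau)$ and $g_2(2\tau)$ are holomorphic and non-vanishing at $\infty$ (their $q$-expansions start $q^1$), so the only place a pole can occur is at the cusps $0$ and $1/2$. One checks that the worst-case term on the right, $G_2(q)G_2^2(q^2)$, together with $G_2^2(q)$ on the left, is bounded by a pole of some explicit order $M$ at the relevant cusp(s), and is holomorphic at $\infty$. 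Consequently $\Delta(\tau)$ is a modular function on $\Gamma_0(4)$ holomorphic at $\infty$ with a pole of order at most $M$ at each of the remaining cusps; the total order of vanishing of a nonzero modular function is zero, so if $\Delta$ vanishes to order $> M$ at $\infty$ it must be identically zero.

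The third step is the finite verification: expand both sides of \eqref{modeq1} as power series in $q$ using $\Phi_2(q)=\prod(1+q^k)/(1-q^k)$-type infinite products (or directly the eta-product expansions of $g_2(\tau)$ and $g_2(2\tau)$), and check that the coefficients of $q^0,q^1,\dots,q^{M}$ agree. Because $G_2(q)=q+24q^2+\cdots$ and $G_2(q^2)=q^2+\cdots$ both start at positive powers of $q$, the constant terms automatically match ($0=0$), and one needs only enough coefficients to exceed the pole bound $M$ at the finite cusps. This is a routine but finite computation.

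The main obstacle is the bookkeeping in the second step: correctly identifying the cusps and widths of $\Gamma_0(4)$ and then feeding the right local parameters into Ligozat's formula so that the pole order bound $M$ comes out small enough that the coefficient check in step three is genuinely finite and short. Once $M$ is pinned down, everything else is mechanical. A cleaner alternative, if one prefers to avoid the cusp analysis entirely, is to substitute the known relation between $g_2(\tau)$ and $g_2(2\tau)$ coming from the degree-$3$ modular polynomial for the $\Gamma_0(2)$ Hauptmodul under $\tau\mapsto 2\tau$; but the cusp-counting argument above is self-contained and uses only Theorems~\ref{thm:NewmanEtaQuotient} and~\ref{thm:Ligozat} already available to us.
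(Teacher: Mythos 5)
Your proposal is correct in outline but takes a genuinely different route from the paper. You treat \eqref{modeq1} as an identity among modular functions on $\Gamma_0(4)$ and reduce it to a finite coefficient check via the valence formula: all four terms are modular on $\Gamma_0(4)$ (since $g_2(\tau)$ is modular on $\Gamma_0(2)\supset\Gamma_0(4)$ and $g_2(2\tau)$ on $\Gamma_0(4)$), the difference $\Delta$ is holomorphic on $\mathbb{H}$, and Ligozat's formula bounds its poles at the cusps $0$ and $1/2$. Carrying out the computation you defer, one finds $\operatorname{Ord}(g_2,0,\Gamma_0(4))=-2$, $\operatorname{Ord}(g_2(2\tau),0,\Gamma_0(4))=-1$, and orders $1$ and $-1$ respectively at $1/2$, so the worst terms $G_2^2(q)$ and $G_2(q)G_2^2(q^2)$ give a pole of order at most $4$ at $0$ and at most $1$ at $1/2$; hence $M=5$ and checking that the coefficients of $q^0,\dots,q^5$ of $\Delta$ vanish completes the proof (the first two vanish trivially since every term is $O(q^2)$). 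One slip of terminology: $g_2(\tau)$ and $g_2(2\tau)$ \emph{vanish} at $\infty$ to orders $1$ and $2$ — they are not ``non-vanishing'' there — but this only helps your argument. The paper instead observes that $G_2(q)$ and $G_2(-q)$ are the two roots of a monic quadratic over the field generated by $G_2(q^2)$: the constant term $G_2(q)G_2(-q)=-G_2(q^2)$ falls out of the eta-quotient formula for $G_2(-q)$ in Lemma~\ref{PhiGnegative}, and the linear coefficient $-(G_2(q)+G_2(-q))$ is read off from $G_2|U_2=24G_2+2048G_2^2$, itself proved by a much smaller pole count on $\Gamma_0(2)$. The paper's route involves less cusp bookkeeping and produces the $U_2$-action formula \eqref{eq:GU1} as a needed by-product, and it explains the shape of \eqref{modeq1} as a minimal polynomial; your route is self-contained, uses only Theorems~\ref{thm:NewmanEtaQuotient} and~\ref{thm:Ligozat}, and is entirely mechanical once $M$ is pinned down. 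Both are valid.
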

\begin{proof}
Note that $G_2(-q)$ and $G_2(q)$ are roots of~(\ref{modeq1}). Consider
\begin{equation}\label{eq:quad1}
u^2+bu+c=(u-G_2(-q))(u-G_2(q))=u^2-(G_2(-q)+G_2(q))u+G_2(-q)G_2(q).
\end{equation}
We want to show that 
\begin{align}
b&=-(48G_2(q^2)+4096G_2^2(q^2))\label{coeff:b},\\
c&=-G_2(q^2)\label{coeff:c}.
\end{align}
The formula~(\ref{coeff:c}) follows easily from the definition of $G_2(q)$ and Lemma~\ref{PhiGnegative}.

By applying Theorem~\ref{thm:GordonHughes}, we see that $G_2(q)|U_2$ is a modular function on $\Gamma_0(2)$, and its only  pole is at the cusp zero for $\Gamma_0(2)$, of order two. This means
\[
G_2(q)|U_2-a_1G_2(q)-a_2G_2^2(q)
\]
has no poles and must be a constant. By comparing the coefficients we find that the constant is zero, $a_1=24$, and $a_2=2048$. Therefore,
\begin{equation}\label{eq:GU1}
G_2(q)|U_2=24G_2(q)+2048G_2^2(q).
\end{equation}
On the other hand, an elementary calculation shows that
\begin{equation}\label{eq:GU2}
G_2(q)|U_2=\dfrac12(G_2(-\sqrt{q})+G_2(\sqrt{q})).
\end{equation}
By comparing~(\ref{eq:GU1}) and (\ref{eq:GU2}), and replacing $q$ by $q^2$, we obtain (\ref{coeff:b}). Thus, (\ref{modeq1}) holds.
\end{proof}
\begin{lemma}\label{lemma:modeq2}
\begin{equation}
G_8^2(q)=(8G_8(q^2)+32G_8^2(q^2))G_8(q)+(G_8(q^2)+4G_8^2(q^2))\label{modeq2}
\end{equation}
\end{lemma}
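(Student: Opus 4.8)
The plan is to prove \eqref{modeq2} in exactly the way \eqref{modeq1} was proved, passing from $\Gamma_0(2)$ to $\Gamma_0(8)$ but at the cost of one extra modular equation. Form the monic quadratic
\[
\bigl(u-G_8(-q)\bigr)\bigl(u-G_8(q)\bigr)=u^2+Bu+C,\qquad B=-\bigl(G_8(-q)+G_8(q)\bigr),\quad C=G_8(-q)G_8(q).
\]
Since $G_8(q)$ is a root, it satisfies $G_8^2(q)+BG_8(q)+C=0$, so it is enough to establish
\[
B=-\bigl(8G_8(q^2)+32G_8^2(q^2)\bigr),\qquad C=-\bigl(G_8(q^2)+4G_8^2(q^2)\bigr),
\]
and then substitute. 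Both $B$ and $C$ are invariant under $q\mapsto-q$, which is why one may hope to express them through $G_8(q^2)$.

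For $C$ I would compute $G_8(-q)$ from the eta-quotient defining $G_8$ by means of \eqref{etanegative}, exactly as in Lemma~\ref{PhiGnegative}; this gives $G_8(-q)=-\eta^4(8\tau)\eta^4(\tau)\eta^2(4\tau)/\eta^{10}(2\tau)$, and after the eta factors cancel, $C=G_8(-q)G_8(q)=-\eta^8(8\tau)/\eta^8(2\tau)$. For $B$ I would use the elementary identity $G_8(q)|U_2=\tfrac12\bigl(G_8(-\sqrt q)+G_8(\sqrt q)\bigr)$, the analogue of \eqref{eq:GU2}, so that $-B=G_8(-q)+G_8(q)=2\,(G_8|U_2)(q^2)$. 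By Theorem~\ref{thm:GordonHughes} with $\ell=2$, $N=4$, the function $G_8|U_2$ is modular on $\Gamma_0(4)$, and inserting the cusp orders of $G_8$ on $\Gamma_0(8)$ (obtained from Theorem~\ref{thm:Ligozat}) into the bounds of Theorem~\ref{thm:GordonHughes} shows that $G_8|U_2$ is holomorphic at the cusps $1/2$ and $\infty$ of $\Gamma_0(4)$ and has at most a simple pole at the cusp $0$. Let $G_4(q)=\eta^8(4\tau)/\eta^8(\tau)$, which by the standard eta-quotient criteria (Theorems~\ref{thm:NewmanEtaQuotient} and \ref{thm:Ligozat}) is a Hauptmodul on $\Gamma_0(4)$ with a simple pole at the cusp $0$. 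Then $G_8|U_2=c_1G_4+c_0$, and matching the coefficients of $q^0$ and $q^1$ gives $c_0=0$, $c_1=4$. Hence $G_8|U_2=4G_4$ and $-B=8\,G_4(q^2)=8\,\eta^8(8\tau)/\eta^8(2\tau)$.

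The remaining and only genuinely new step is to convert $\eta^8(8\tau)/\eta^8(2\tau)$ into a quadratic polynomial in $G_8(q^2)$; in the proof of \eqref{modeq1} the product of the conjugate roots was literally the Hauptmodul $-G_2(q^2)$, so nothing of this kind arose. I would establish the auxiliary modular equation
\[
\frac{\eta^8(4\tau)}{\eta^8(\tau)}=G_8(q)+4G_8^2(q),
\]
which is equivalent to the eta identity $\eta^{12}(4\tau)=\eta^4(8\tau)\eta^2(4\tau)\eta^2(2\tau)\eta^4(\tau)+4\,\eta^8(8\tau)\eta^4(2\tau)$. Viewed on $\Gamma_0(8)$, the function $G_4$ is holomorphic on $\mathbb{H}$ and, by Theorem~\ref{thm:Ligozat}, holomorphic at the cusps $1/2$, $1/4$ and $\infty$ of $\Gamma_0(8)$ with a pole of order $2$ only at the cusp $0$; since $G_8$ is a Hauptmodul on $\Gamma_0(8)$ with a simple pole at the cusp $0$, this forces $G_4=a+bG_8+cG_8^2$, and comparing the coefficients of $q^0,q^1,q^2$ gives $a=0$, $b=1$, $c=4$. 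Replacing $q$ by $q^2$ then gives $\eta^8(8\tau)/\eta^8(2\tau)=G_8(q^2)+4G_8^2(q^2)$, so $C=-\bigl(G_8(q^2)+4G_8^2(q^2)\bigr)$ and $-B=8\bigl(G_8(q^2)+4G_8^2(q^2)\bigr)$; inserting these into $G_8^2(q)=-BG_8(q)-C$ yields \eqref{modeq2}.

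The main obstacle is precisely this auxiliary equation, together with the formula $G_8|U_2=4G_4$: both demand careful bookkeeping of cusps, widths and orders across $\Gamma_0(4)$, $\Gamma_0(8)$ and (implicitly, since $q\mapsto q^2$ raises the level) $\Gamma_0(16)$, which is the one place where the transcription of the $\Gamma_0(2)$ argument of \eqref{modeq1} is not automatic. Everything else goes through line for line.
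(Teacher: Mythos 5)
Your proposal is correct and follows essentially the same route as the paper: form the quadratic with roots $G_8(\pm q)$, compute the product of the roots as $-\eta^8(8\tau)/\eta^8(2\tau)$ via the eta transformation, identify the sum of the roots as $2(G_8|U_2)(q^2)$, and pin everything down with the same auxiliary identity $\eta^8(4\tau)/\eta^8(\tau)=G_8(q)+4G_8^2(q)$ obtained by a valence/order argument. The only (harmless) variation is that you express $G_8|U_2$ as $4G_4$ on $\Gamma_0(4)$ and then substitute $G_4=G_8+4G_8^2$, whereas the paper writes $G_8|U_2=4G_8+16G_8^2$ directly on $\Gamma_0(8)$ and reserves the auxiliary identity for the constant term alone.
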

\begin{proof}
The proof is similar to the proof of the previous lemma. Note that $G_8(-q)$ and $G_8(q)$ are roots of~(\ref{modeq2}). Consider
\begin{equation}\label{eq:quad2}
u^2+bu+c=(u-G_8(-q))(u-G_8(q))=u^2-(G_8(-q)+G_8(q))u+G_8(-q)G_8(q).
\end{equation}
We want to show that 
\begin{align}
b&=-(8G_8(q^2)+32G_8^2(q^2))\label{coeff:b2},\\
c&=-(G_8(q^2)+4G_8^2(q^2))\label{coeff:c2}.
\end{align}

By applying Theorem~\ref{thm:GordonHughes}, we see that  $G_8(q)|U_2$ is a modular function on $\Gamma_0(8)$, and its only  pole is at the cusp zero for $\Gamma_0(8)$, of order two. This means
\[
G_8(q)|U_2-a_1G_8(q)-a_2G_8^2(q)
\]
has no poles and must be a constant. By comparing the coefficients we find that the constant is zero, $a_1=4$, and $a_2=16$. Therefore,
\begin{equation}\label{eq:GU81}
G_8(q)|U_2=4G_8(q)+16G_8^2(q).
\end{equation}
On the other hand,
\begin{equation}\label{eq:GU82}
G_8(q)|U_2=\dfrac12(G_8(-\sqrt{q})+G_8(\sqrt{q})).
\end{equation}
By comparing~(\ref{eq:GU81}) and (\ref{eq:GU82}), and replacing $q$ by $q^2$, we obtain (\ref{coeff:b2}).

To prove~(\ref{coeff:c2}), we need to calculate $G_8(-q)G_8(q)$. Using the definition of $G_8(q)$ and Lemma~\ref{etanegative}, we obtain
\[
G_8(-q)=-\dfrac{\eta^4(8\tau)\eta^2(4\tau)\eta^4(\tau)}{\eta^{10}(2\tau)},
\]
and therefore,
\[
G_8(-q)G_8(q)=-\dfrac{\eta^8(8\tau)}{\eta^8(2\tau)}.
\]
By Theorem~\ref{thm:NewmanEtaQuotient}, the function
\[
\left(\dfrac{\eta(4\tau)}{\eta(\tau)}\right)^8
\]
is a modular function on $\Gamma_0(8)$, and its only pole is at the cusp 0 for $\Gamma_0(8)$, of order 2. Thus,
\[
\left(\dfrac{\eta(4\tau)}{\eta(\tau)}\right)^8-a_1G_8(q)-a_2G_8^2(q)
\]
must be a constant. By comparing the coefficients, we obtain
\begin{equation}\label{eq:G4G8}
\left(\dfrac{\eta(4\tau)}{\eta(\tau)}\right)^8=G_8(q)+4G_8^2(q).
\end{equation}
After replacing $q$ by $q^2$ we obtain
\[
G_8(-q)G_8(q)=-(G_8(q^2)+4G_8^2(q^2)).
\]
This completes the proof of (\ref{coeff:c2}) and the lemma.
\end{proof}
\begin{lemma}
\begin{equation}\label{modeq3}
G_2(q)=G_8(q)+20G_8^2(q)+128G_8^3(q)+256G_8^4(q)
\end{equation}
\end{lemma}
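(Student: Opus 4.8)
The plan is to use the inclusion $\Gamma_0(8)\subseteq\Gamma_0(2)$, which makes $G_2$ automatically a modular function on $\Gamma_0(8)$, and then to express $G_2$ as a polynomial in the Hauptmodul $G_8$, exactly in the style of the preceding two lemmas.

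First I would pin down the behaviour of $G_2$ at the cusps of $\Gamma_0(8)$. The curve $X_0(8)$ has the four cusps $\infty,\,0,\,1/2,\,1/4$, of widths $1,8,2,1$ respectively. Since $G_2=(\eta(2\tau)/\eta(\tau))^{24}$ is an eta-quotient, it is holomorphic and non-vanishing on $\mathbb{H}$, so its only possible poles are at these cusps. Applying Theorem~\ref{thm:Ligozat} with $N=8$ and exponents $m_1=-24$, $m_2=24$, $m_4=m_8=0$ gives $\operatorname{Ord}(G_2,r,\Gamma_0(8))=1,-4,2,1$ at $r=\infty,0,1/2,1/4$ respectively. (Equivalently: the cusp $0$ has width $2$ for $\Gamma_0(2)$ and width $8$ for $\Gamma_0(8)$, so the simple pole of $G_2$ at $0$ becomes a pole of order $4$ once $G_2$ is regarded as a function on $\Gamma_0(8)$.) Hence, on $\Gamma_0(8)$, $G_2$ is holomorphic everywhere except for a pole of order $4$ at the cusp $0$.

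Next, since $G_8$ is a Hauptmodul for $\Gamma_0(8)$ whose only pole is a simple pole at the cusp $0$, one can choose constants $a_1,a_2,a_3,a_4$ so that
\[
G_2(q)-a_1G_8(q)-a_2G_8^2(q)-a_3G_8^3(q)-a_4G_8^4(q)
\]
is holomorphic at $0$ — cancelling the pole order by order, starting from the order-$4$ term — while it is automatically holomorphic on $\mathbb{H}$ and at the cusps $\infty,1/2,1/4$. A holomorphic modular function on $\Gamma_0(8)$ is constant, so $G_2=a_0+a_1G_8+a_2G_8^2+a_3G_8^3+a_4G_8^4$ for some constants $a_i$. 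Finally I would determine the $a_i$ by comparing $q$-expansions: from the eta-products one gets $G_2=q+24q^2+\cdots$ and $G_8=q+4q^2+\cdots$, both with leading term $q$, so $G_8^j=q^j+\cdots$ and matching the coefficients of $q^0,\dots,q^4$ gives a lower-triangular linear system with solution $a_0=0$, $a_1=1$, $a_2=20$, $a_3=128$, $a_4=256$, which is exactly \eqref{modeq3}.

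The only step that requires genuine care is the cusp analysis: one must verify through Theorem~\ref{thm:Ligozat} that $G_2$ really is holomorphic at the cusps $1/2$ and $1/4$ and has pole order \emph{exactly} $4$ at $0$, since this is precisely what guarantees that a degree-$4$ polynomial in $G_8$ suffices. Once that is in place the rest is the same bookkeeping as in Lemmas~\ref{lemma:modeq2} and the one preceding it, and the coefficient comparison is a short finite computation.
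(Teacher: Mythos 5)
Your proof is correct and takes essentially the same route as the paper: identify via Theorem~\ref{thm:Ligozat} that $G_2$, viewed on $\Gamma_0(8)$, has its only pole at the cusp $0$ of order $4$, subtract a degree-$4$ polynomial in the Hauptmodul $G_8$ to obtain a holomorphic (hence constant) modular function, and fix the coefficients by comparing $q$-expansions. Your explicit cusp-by-cusp order computation and the width-ratio remark supply details the paper leaves implicit, but the argument is the same.
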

\begin{proof}
By Theorem~\ref{thm:Ligozat}, the only pole of $G_2(q)$, considered as a modular function on $\Gamma_0(8)$, is at the cusp zero for $\Gamma_0(8)$, of order four. This means
\[
G_2(q)-a_1G_8(q)-a_2G_8^2(q)-a_3G_8^3(q)-a_4G_8^4(q)
\]
must be a constant. By comparing the coefficients, we get~(\ref{modeq3}).
\end{proof}
\begin{lemma}
\begin{align}
G_8(q)|U_2 &= 4G_8(q) + 16G_8^2(q) \label{eq:G8U} \\
\Phi_2(-q)|U_2 &= (1 + 4G_8(q))\Phi_2(-q^2) \label{eq:Phi2U} \\
(\Phi_2(-q)G_8(q))|U_2 &= (2G_8(q) + 8G_8^2(q))\Phi_2(-q^2) \label{eq:Phi2G8U} \\
(\Phi_2(-q)G_2(-q))|U_2 &= P(G_8(q))\Phi_2(-q^2) \label{eq:Phi2G2U}
\end{align}
where $P(x)$ is the polynomial
\begin{align*}
P(x) = 2\cdot13x &+ 2^3\cdot403x^2 + 2^6\cdot2015x^3 + 2^{10}\cdot2431x^4 \\
&+ 2^{15}\cdot819x^5 + 2^{19}\cdot325x^6 + 2^{22}\cdot151x^7 + 2^{26}\cdot19x^8 + 2^{30}x^9.
\end{align*}
\end{lemma}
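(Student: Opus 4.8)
The plan is to treat the four formulas by one uniform device: realise each left-hand side as $\Phi_2(-q^2)$ times a modular function on $\Gamma_0(8)$ whose only pole lies at the cusp $0$, with an explicit bound on the pole order, and then use that $G_8$ is a Hauptmodul on $\Gamma_0(8)$ with a single simple pole at $0$. Identity \eqref{eq:G8U} itself needs nothing new, being literally \eqref{eq:GU81} from the proof of Lemma~\ref{lemma:modeq2}.

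For the remaining three, write $h$ for one of the eta-quotients
$$
\Phi_2(-q)=\frac{\eta^2(\tau)\eta^2(4\tau)}{\eta^5(2\tau)},\qquad
\Phi_2(-q)G_8(q)=\frac{\eta^4(8\tau)}{\eta^2(\tau)\eta^3(2\tau)},\qquad
\Phi_2(-q)G_2(-q)=-\frac{\eta^{26}(\tau)\eta^{26}(4\tau)}{\eta^{53}(2\tau)},
$$
each read off from Lemma~\ref{PhiGnegative} and the definitions, and each an eta-quotient of weight $-\tfrac12$ on a level dividing $8$, hence also a form on $\Gamma_0(16)$. Since $\Phi_2(-q^2)=\eta^2(2\tau)\eta^2(8\tau)/\eta^5(4\tau)$ also has weight $-\tfrac12$, and one checks that the multiplier systems of $h|U_2$ and $\Phi_2(-q^2)$ agree, the quotient $X_h:=(h\,|\,U_2)/\Phi_2(-q^2)$ is an ordinary modular function on $\Gamma_0(8)$. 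I would then, at each of the four cusps $\infty,\tfrac14,\tfrac12,0$ of $\Gamma_0(8)$, combine the lower bound for $\operatorname{Ord}(h|U_2,\cdot,\Gamma_0(8))$ from Theorem~\ref{thm:GordonHughes} (with $\ell=2$, $N=8$, fed by the orders of $h$ at the cusps of $\Gamma_0(16)$ supplied by Theorem~\ref{thm:Ligozat}) with the exact order of $\Phi_2(-q^2)$ there from Theorem~\ref{thm:Ligozat}. The bookkeeping shows that $X_h$ is holomorphic at $\infty,\tfrac14,\tfrac12$ and has at worst a pole at $0$ of order $1$, $2$, and $9$ in the three cases.

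Since $G_8^k$ has a pole of order exactly $k$ at the cusp $0$ and is holomorphic elsewhere on $\Gamma_0(8)$, every modular function on $\Gamma_0(8)$ holomorphic away from $0$ with pole order $\le k$ there is a polynomial in $G_8$ of degree $\le k$. Thus $X_h$ is such a polynomial, of degree $\le 1$, $\le 2$, $\le 9$; moreover, because $\Phi_2(-q)G_8(q)$ and $\Phi_2(-q)G_2(-q)$ both vanish at $\infty$ (as $G_8(q)$ and $G_2(-q)$ begin at $q$), while $\Phi_2(-q^2)=1+O(q^2)$, the second and third $X_h$ have no constant term. It then remains to pin down the coefficients by matching the leading terms of the $q$-expansions — two, three, and ten coefficients respectively — all computed directly from the product expansions of $\eta$ and of $\varphi$. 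This yields \eqref{eq:Phi2U}, \eqref{eq:Phi2G8U}, and \eqref{eq:Phi2G2U} with $P$ as stated.

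The step demanding real care is the cusp analysis producing the pole-order bounds for $X_h$: one must check, cusp by cusp, which branch of Theorem~\ref{thm:GordonHughes} is in force (this depends on the $2$-adic valuation of the cusp denominator relative to that of the level), identify the correct preimage cusps on $\Gamma_0(16)$, handle the case $\pi(d)=0$ at the cusp $0$ where a minimum over two preimages enters, and confirm that the Gordon--Hughes lower bound for $h|U_2$ always dominates the (half-integral, sometimes negative) order of $\Phi_2(-q^2)$ at every cusp except $0$. The only laborious — but purely mechanical — part is extracting enough coefficients of $(\Phi_2(-q)G_2(-q))\,|\,U_2$ to fix the nine coefficients of $P$; computing a handful of extra coefficients also upgrades the argument from interpolation to proof.
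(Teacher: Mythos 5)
Your overall strategy --- reduce each identity to showing that a modular function on $\Gamma_0(8)$ whose only pole is at the cusp $0$ must be a polynomial in the Hauptmodul $G_8$, then fix the coefficients from the $q$-expansion --- is exactly the paper's, and identity \eqref{eq:G8U} is indeed just \eqref{eq:GU81}. The one place where your write-up has a genuine gap is the justification that $X_h=(h\,|\,U_2)/\Phi_2(-q^2)$ is a modular function with controlled cusp behaviour. Each eta-quotient $h$ you list has $\sum_\delta m_\delta=-1$, i.e.\ weight $-\tfrac12$ with a nontrivial multiplier, so Theorem~\ref{thm:GordonHughes} as stated (which requires $f$ to be a \emph{modular function} on $\Gamma_0(\ell N)$) does not apply to $h$, and the assertion that ``the multiplier systems of $h|U_2$ and $\Phi_2(-q^2)$ agree'' cannot be taken for granted: $U_2$ does not interact with a half-integral-weight multiplier in the naive way, and none of the theorems quoted in the paper cover that situation.

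The paper's fix is to normalize \emph{before} applying $U_2$: set $F(q)=h(q)/\Phi_2(-q^4)$. This is a weight-$0$ eta-quotient (for instance $F=\eta^2(\tau)\eta^5(8\tau)/\bigl(\eta^5(2\tau)\eta^2(16\tau)\bigr)$ when $h=\Phi_2(-q)$) which Theorem~\ref{thm:NewmanEtaQuotient} certifies as a genuine modular function on $\Gamma_0(16)$; Theorems~\ref{thm:GordonHughes} and~\ref{thm:Ligozat} then apply legitimately to $F$, and since $\Phi_2(-q^4)$ is a power series in $q^2$ one has $h|U_2=(F\cdot\Phi_2(-q^4))|U_2=\Phi_2(-q^2)\,(F|U_2)$, so your $X_h$ equals $F|U_2$ after all. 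With that single reordering the rest of your argument goes through as written: the cusp-by-cusp bookkeeping, the vanishing of the constant term in the second and third cases, and the coefficient matching (two, three and ten coefficients respectively).
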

\begin{proof}
We already proved (\ref{eq:G8U}) since it is (\ref{eq:GU81}) from the proof of Lemma~\ref{lemma:modeq2}.

Let $F(q) = \dfrac{\Phi_2(-q)}{\Phi_2(-q^4)}$. Using Lemma~\ref{PhiGnegative},
\[
F(q)=\dfrac{\eta^2(\tau)\eta^5(8\tau)}{\eta^5(2\tau)\eta^2(16\tau)}.
\]
By Theorem~\ref{thm:NewmanEtaQuotient}, $F(q)$ is a modular function on $\Gamma_0(16)$, and by Theorem~\ref{thm:GordonHughes}, $F(q)|U_2$ is a modular function on $\Gamma_0(8)$, and has a simple pole at the cusp 0 for $\Gamma_0(8)$. Thus,
\[
F(q)|U_2-a_1G_8
\]
must be a constant. By comparing coefficients we see that 
\[
F(q)|U_2=1+4G_8.
\]
On the other hand, 
\[
F(q)|U_2=\cfrac{\Phi_2(-q)}{\Phi_2(-q^4)}|U_2=\cfrac{\Phi_2(-q)|U_2}{\Phi_2(-q^2)}
\]
by using the properties of the $U_2$ operator. Thus,
\[
\Phi_2(-q)|U_2=(1+4G_8(q))\Phi_2(-q^2).
\]

Proofs of (\ref{eq:Phi2G8U}) and (\ref{eq:Phi2G2U}) are similar.
\end{proof}
\begin{theorem}\label{FG2Upowers}
For $s\ge0$ there exist integers $c_{1r}^s$ such that
\begin{equation}\label{FG2U2}
\Phi_2(-q)G_2^s(-q)|U_2=\Phi_2(-q^2)\sum_{r=\lceil s/2\rceil}^{8s+1} c_{1r}^sG_8^r
\end{equation}
and
\begin{equation}\label{FG2U2coeffval}
\pi_2(c_{1r}^s)\ge2r-s
\end{equation}
\end{theorem}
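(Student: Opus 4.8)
The plan is to prove (\ref{FG2U2}) together with (\ref{FG2U2coeffval}) by induction on $s$, with the recursion passing from $s-1$ and $s$ to $s+1$, so that two base cases are required. Both are already in hand. For $s=0$, identity (\ref{eq:Phi2U}) reads $\Phi_2(-q)|U_2=(1+4G_8)\Phi_2(-q^2)$, which is (\ref{FG2U2}) with $c_{10}^0=1$, $c_{11}^0=4$, and $\pi_2(c_{1r}^0)\ge 2r$ holds by inspection. For $s=1$, identity (\ref{eq:Phi2G2U}) is exactly (\ref{FG2U2}), and every coefficient of the polynomial $P$ is visibly divisible by $2^{2r-1}$, which is (\ref{FG2U2coeffval}); the index ranges $0\le r\le 1$ and $1\le r\le 9$ agree with $\lceil s/2\rceil\le r\le 8s+1$. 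So the base of the induction just amounts to reading off the two displayed formulas.

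For the inductive step I would fix $s\ge1$ and assume the statement for $s-1$ and $s$. The key input is the quadratic relation satisfied by $G_2(-q)$ and $G_2(q)$: from (\ref{eq:quad1})--(\ref{coeff:c}), equivalently from (\ref{modeq1}) after $q\mapsto -q$,
\[
G_2^2(-q)=G_2(q^2)+G_2(-q)\bigl(48G_2(q^2)+4096G_2^2(q^2)\bigr).
\]
Multiplying by $\Phi_2(-q)G_2^{s-1}(-q)$ writes $\Phi_2(-q)G_2^{s+1}(-q)$ as $\Phi_2(-q)G_2^{s-1}(-q)\,G_2(q^2)$ plus $\Phi_2(-q)G_2^{s}(-q)$ times the $q^2$-series $48G_2(q^2)+4096G_2^2(q^2)$. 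Applying $U_2$ and using the elementary identity $(f(q)g(q^2))|U_2=(f(q)|U_2)\,g(q)$ expresses $\Phi_2(-q)G_2^{s+1}(-q)|U_2$ as a combination of $\Phi_2(-q)G_2^{s-1}(-q)|U_2$ and $\Phi_2(-q)G_2^{s}(-q)|U_2$ with coefficients $G_2(q)$, $48G_2(q)$ and $4096G_2^2(q)$. Substituting the inductive expansions and then replacing $G_2(q)$ and $G_2^2(q)$ by polynomials in $G_8(q)$ via (\ref{modeq3}) turns the result into $\Phi_2(-q^2)$ times a polynomial in $G_8$; reading off the coefficient of $G_8^m$ both establishes the shape (\ref{FG2U2}) for $s+1$ and defines the integers $c_{1m}^{s+1}$.

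It then remains to verify the index range and the bound (\ref{FG2U2coeffval}). Write $Q(x)=x+20x^2+128x^3+256x^4$ for the $G_8$-polynomial representing $G_2$ from (\ref{modeq3}), and $A(x)=\sum_r c_{1r}^{s-1}x^r$, $B(x)=\sum_r c_{1r}^{s}x^r$. The three contributions $A(G_8)Q(G_8)$, $48B(G_8)Q(G_8)$ and $4096B(G_8)Q(G_8)^2$ have top $G_8$-degrees $8(s-1)+1+4$, $8s+1+4$ and $8s+1+8=8(s+1)+1$, and bottom degrees $\lceil(s-1)/2\rceil+1$, $\lceil s/2\rceil+1$ and $\lceil s/2\rceil+2$; since $\lceil(s-1)/2\rceil+1=\lceil(s+1)/2\rceil$ is the smallest of these, the support of the polynomial lies in $[\lceil(s+1)/2\rceil,\,8(s+1)+1]$. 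For the valuation, I would observe that the coefficient of $x^j$ in $Q$ has $\pi_2\ge 2j-2$, hence the coefficient of $x^k$ in $Q^2$ has $\pi_2\ge 2k-4$; combining these with $\pi_2(48)=4$, $\pi_2(4096)=12$ and the inductive bounds $\pi_2(c_{1r}^{s-1})\ge 2r-(s-1)$, $\pi_2(c_{1r}^{s})\ge 2r-s$, a term-by-term estimate of the $G_8^m$-coefficient gives $\pi_2\ge 2m-(s+1)$.

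I expect the valuation count to be the only delicate step, precisely because it is tight: in $A(G_8)Q(G_8)$ the contribution to the $G_8^m$-coefficient coming from $c_{1r}^{s-1}$ and the degree-$j$ term of $Q$ has valuation exactly $\bigl(2r-(s-1)\bigr)+(2j-2)=2m-(s+1)$ with $m=r+j$, so the $2$-adic slack carried by the coefficients of (\ref{modeq1}) and (\ref{modeq3}) must be used up exactly, with nothing to spare, while the remaining two terms (carrying the extra factors $48$ and $4096$) are comfortably stronger. One also needs the elementary identity $\lceil(s-1)/2\rceil+1=\lceil(s+1)/2\rceil$ to fix the lower endpoint. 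Beyond these points the step is routine polynomial manipulation with the data already collected in the preceding lemmas.
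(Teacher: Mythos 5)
Your proposal is correct and follows essentially the same route as the paper: a two-step induction with base cases $s=0,1$ read off from (\ref{eq:Phi2U}) and (\ref{eq:Phi2G2U}), the quadratic relation (\ref{modeq1}) with $q\mapsto -q$ multiplied by $\Phi_2(-q)G_2^{s-1}(-q)$, application of $U_2$ via $(f(q)g(q^2))|U_2=(f(q)|U_2)g(q)$, and conversion to $G_8$ through (\ref{modeq3}). In fact your explicit valuation bookkeeping (the bounds $\pi_2\ge 2j-2$ for the coefficients of $Q$ and the tightness of the $A(G_8)Q(G_8)$ term) supplies the detail the paper compresses into ``by carefully estimating valuations.''
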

\begin{proof}
We proceed by induction on $s$. Formulas~(\ref{eq:Phi2U}), (\ref{eq:Phi2G2U}) show that the result is true for $s=0$ and $s=1$. Assume $k\ge1$ is fixed and the result is true for $s\le k$. We prove that it is true for $s=k+1$.

Replace $q$ by $-q$ in~(\ref{modeq1}), and then every occurrence of $G_2(q^2)$ in terms of $G_8(q^2)$ using~(\ref{modeq3}), to obtain
\begin{multline*}
G_2^2(-q)=G_{8}(q^2) + 2^{2} \cdot 5 G_{8}^{2}(q^2) + 2^{7} G_{8}^{3}(q^2) + 2^{8} G_{8}^{4}(q^2)\\
+ G_2(-q)\Big(48\left(G_{8}(q^2) + 2^{2} \cdot 5 G_{8}^{2}(q^2) + 2^{7} G_{8}^{3}(q^2) + 2^{8} G_{8}^{4}(q^2)\right)\\
+4096\left(G_{8}(q^2) + 2^{2} \cdot 5 G_{8}^{2}(q^2) + 2^{7} G_{8}^{3}(q^2) + 2^{8} G_{8}^{4}(q^2)\right)^2\Big)
\end{multline*}
After multiplying by $\Phi_2(-q)G_2^{k-1}(-q)$, we get
\begin{multline*}
\Phi_2(-q)G_2^{k+1}(-q)\\
=\Phi_2(-q)G_2^{k-1}(-q)\Big(G_{8}(q^2) + 2^{2} \cdot 5 G_{8}^{2}(q^2) + 2^{7} G_{8}^{3}(q^2) + 2^{8} G_{8}^{4}(q^2)\Big)\\
\qquad + \Phi_2(-q)G_2^k(-q)\Big(48\left(G_{8}(q^2) + 2^{2} \cdot 5 G_{8}^{2}(q^2) + 2^{7} G_{8}^{3}(q^2) + 2^{8} G_{8}^{4}(q^2)\right)\\
+4096\left(G_{8}(q^2) + 2^{2} \cdot 5 G_{8}^{2}(q^2) + 2^{7} G_{8}^{3}(q^2) + 2^{8} G_{8}^{4}(q^2)\right)^2\Big).
\end{multline*}
After applying $U_2$ and simplifying, we obtain
\begin{multline*}
\Phi_2(-q)G_{2}^{k+1}(-q)|U_2 \\
= \left(\Phi_2(-q)G_2^{k-1}(-q)|U_2\right)\Big(G_{8}(q) + 2^{2} \cdot 5 G_{8}^{2}(q) + 2^{7} G_{8}^{3}(q) + 2^{8} G_{8}^{4}(q)\Big) + \\
+ \left(\Phi_2(-q)G_{2}^k(-q)|U_2\right)\Big(2^{4} \cdot 3 G_{8}(q) + 2^{6} \cdot 79 G_{8}^{2}(q) + 2^{11} \cdot 83 G_{8}^{3}(q) \\
+ 2^{12} \cdot 659 G_{8}^{4}(q)+ 2^{21} \cdot 11 G_{8}^{5}(q)
+ 2^{23} \cdot 13 G_{8}^{6}(q) + 2^{28} G_{8}^{7}(q) + 2^{28} G_{8}^{8}(q)\Big).
\end{multline*}
Then using the induction hypothesis, we get
\begin{multline}\label{FG2U2modeqinduction}
\Phi_2(-q)G_{2}^{k+1}(-q)|U_2 \\
= \Phi_2(-q^2)\left(\sum_{r=\lceil (k-1)/2\rceil}^{8k-7} c_{1r}^{k-1}G_8^r\right)\Bigg(G_{8}(q) + 2^{2} \cdot 5 G_{8}^{2}(q) + 2^{7} G_{8}^{3}(q) + 2^{8} G_{8}^{4}(q)\Bigg) + \\
+ \Phi_2(-q^2)\left(\sum_{r=\lceil k/2\rceil}^{8k+1} c_{1r}^kG_8^r\right)\Bigg(2^{4} \cdot 3 G_{8}(q) + 2^{6} \cdot 79 G_{8}^{2}(q) + 2^{11} \cdot 83 G_{8}^{3}(q) \\
\qquad\; + 2^{12} \cdot 659 G_{8}^{4}(q)+ 2^{21} \cdot 11 G_{8}^{5}(q)
+ 2^{23} \cdot 13 G_{8}^{6}(q) + 2^{28} G_{8}^{7}(q) + 2^{28} G_{8}^{8}(q)\Bigg)\\
=\Phi_2(-q^2)\sum_{r=\lceil(k+1)/2\rceil}^{8k+9} c_{1,r}^{k+1}G_8^r.
\end{multline}
Using the induction hypothesis for the valuation of coefficients $c_{1r}^{k-1}$ and $c_{1r}^k$, and by carefully estimating valuations of coefficients in front of powers of $G_8$, we obtain
\[
\pi_2(c_{1,r}^{k+1})\ge 2r-k-1.
\]
This completes the induction.
\end{proof}
\begin{lemma}\label{G8Upowers}
For $j\ge1$ there exist integers $t_{jr}$ and $u_{jr}$ such that
\begin{align}
G_8^j|U_2&=\sum_{r=\lceil j/2\rceil}^{2j} t_{jr}G_8^r,\label{G8U2coeffs}\\
\Phi_2(-q)G_8^j|U_2&=\Phi_2(-q^2)\sum_{r=1}^\infty u_{jr}G_8^r,\label{FG8U2coeffs}
\end{align}
where
\begin{align}
\pi_2(t_{jr})&\ge2r-j,\label{G8val}\\
\pi_2(u_{jr})&\ge2r-j\label{FG8val}.
\end{align}
\end{lemma}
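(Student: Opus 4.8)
The plan is to prove both identities together with the valuation bounds (\ref{G8val}) and (\ref{FG8val}) simultaneously, by a two-step induction on $j$ driven by the modular equation (\ref{modeq2}). The base cases are immediate: for $j=0$ we have $G_8^0|U_2=1$ and, by (\ref{eq:Phi2U}), $\Phi_2(-q)|U_2=(1+4G_8)\Phi_2(-q^2)$; for $j=1$ the two formulas are precisely (\ref{eq:G8U}) and (\ref{eq:Phi2G8U}), and one checks that the coefficient pairs $(4,16)$ and $(2,8)$ lie in the range $\lceil j/2\rceil\le r\le 2j$ and satisfy $\pi_2\ge 2r-j$. (The $j=0$ data is needed only to prime the recursion below; for the $\Phi_2$ part it carries a constant term $r=0$, which is absorbed after one more step and so does not affect the stated form for $j\ge 1$.)

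For the inductive step, assume the lemma holds for all indices $\le j$ with $j\ge 1$. Multiplying (\ref{modeq2}) by $G_8^{j-1}(q)$ gives
\[
G_8^{j+1}(q)=\bigl(8G_8(q^2)+32G_8^2(q^2)\bigr)G_8^{j}(q)+\bigl(G_8(q^2)+4G_8^2(q^2)\bigr)G_8^{j-1}(q);
\]
applying $U_2$ and using $\bigl(f(q^2)\,g(q)\bigr)|U_2=f(q)\,\bigl(g(q)|U_2\bigr)$ yields
\[
G_8^{j+1}|U_2=\bigl(8G_8+32G_8^2\bigr)\bigl(G_8^{j}|U_2\bigr)+\bigl(G_8+4G_8^2\bigr)\bigl(G_8^{j-1}|U_2\bigr).
\]
Substituting the induction hypothesis (\ref{G8U2coeffs}) for $G_8^{j}|U_2$ and $G_8^{j-1}|U_2$ and collecting powers of $G_8$ produces the recursion
\[
t_{j+1,r}=8\,t_{j,r-1}+32\,t_{j,r-2}+t_{j-1,r-1}+4\,t_{j-1,r-2}.
\]
Running the identical computation with (\ref{modeq2}) multiplied instead by $\Phi_2(-q)G_8^{j-1}(q)$, and factoring out the common $\Phi_2(-q^2)$ after applying $U_2$, gives exactly the same recursion $u_{j+1,r}=8u_{j,r-1}+32u_{j,r-2}+u_{j-1,r-1}+4u_{j-1,r-2}$ for the coefficients in (\ref{FG8U2coeffs}), so both parts are handled at once.

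It remains to verify the two assertions about the coefficients. For the support: the top power in (\ref{G8U2coeffs}) for index $j$ is $2j$ and for index $j-1$ is $2j-2$, so the recursion caps the top power of $G_8^{j+1}|U_2$ at $\max(2+2j,\,2+(2j-2))=2(j+1)$; for the bottom power, since $\lceil(j-1)/2\rceil\le\lceil j/2\rceil$, the lowest index that can occur is $1+\lceil(j-1)/2\rceil$, and a short even/odd case check shows $1+\lceil(j-1)/2\rceil=\lceil(j+1)/2\rceil$ — in particular the sum in (\ref{FG8U2coeffs}) is in fact finite. For the valuations, feeding $\pi_2(t_{jr})\ge 2r-j$ into the recursion, the four summands of $t_{j+1,r}$ have $2$-adic valuation at least $3+2(r-1)-j$, $5+2(r-2)-j$, $2(r-1)-(j-1)$ and $2+2(r-2)-(j-1)$, i.e.\ at least $2r-j+1$, $2r-j+1$, $2r-j-1$ and $2r-j-1$; the minimum is $2r-(j+1)$, exactly as claimed, and the same estimate applies verbatim to the $u$'s. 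The delicate point — and the one place where no slack survives — is the pair of terms inherited from $G_8^{j-1}|U_2$: they gain at most two extra powers of $2$, so the bound $\pi_2\ge 2r-(j+1)$ is attained on the nose and the induction hypothesis must be invoked there at full strength. This is precisely why the recursion has to reach back two steps, and it is the only spot in the argument that needs care rather than routine bookkeeping; once it is checked, the induction is complete.
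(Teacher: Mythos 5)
Your proof is correct and follows essentially the same route as the paper: both derive the recursion $t_{j+1,r}=8t_{j,r-1}+32t_{j,r-2}+t_{j-1,r-1}+4t_{j-1,r-2}$ from the modular equation (\ref{modeq2}), run a two-step induction, and obtain the valuation bound from the minimum of the four terms, with the $u_{jr}$ handled by the identical recursion. The only cosmetic difference is your choice of base cases $j=0,1$ (with the $r=0$ edge term correctly noted) versus the paper's $j=1,2$.
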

\begin{proof}
Formulas~(\ref{G8U2coeffs}) and (\ref{FG8U2coeffs}) follow easily from (\ref{modeq2}) in combination with (\ref{eq:G8U}), (\ref{eq:Phi2U}), and (\ref{eq:Phi2G8U}).

The proof of formulas~(\ref{G8val}) and (\ref{FG8val}) proceeds by induction on $j$. We first prove (\ref{G8val}). From (\ref{eq:G8U}), the statement is true for $j=1$. For $j=2$, apply $U_2$ to (\ref{modeq2}) and use (\ref{eq:G8U}) to get
\begin{multline*}
G_8^2(q)|U_2=(8G_8(q)+32G_8^2(q))(4G_8(q)+16G_8^2(q))+(G_8(q)+4G_8^2(q))\\
=G_8(q)+36G_8^2(q)+256G_8^3(q)+512G_8^4(q).
\end{multline*}
This shows that (\ref{G8val}) is true for $j=2$.
Fix $k\ge2$ and assume the result is true for $j\le k$. We prove that is it true for $j=k+1$. Multiply (\ref{modeq2}) by $G_8^{k-1}$ to obtain
\[
G_8^{k+1}=(8G_8(q^2)+32G_8^2(q^2))G_8^k+(G_8(q^2)+4G_8^2(q^2))G_8^{k-1}.
\]
Apply the $U_2$ operator
\[
G_8^{k+1}|U_2=(8G_8(q)+32G_8^2(q))(G_8^k|U_2)+(G_8(q)+4G_8^2(q))(G_8^{k-1}|U_2).
\]
By (\ref{G8U2coeffs}),
\begin{multline*}
G_8^{k+1}|U_2=(8G_8(q)+32G_8^2(q))\sum_{r=\lceil k/2\rceil}^{2k} t_{kr}G_8^r+(G_8(q)+4G_8^2(q))\sum_{r=\lceil(k-1)/2\rceil}^{2k-2} t_{k-1,r}G_8^r\\
=(8G_8(q)+32G_8^2(q))\sum_{r=\lceil (k-1)/2\rceil}^{2k} t_{kr}G_8^r+(G_8(q)+4G_8^2(q))\sum_{r=\lceil(k-1)/2\rceil}^{2k} t_{k-1,r}G_8^r
\end{multline*}
since $t_{kr}=0$ for $r<\lceil k/2\rceil$ and $r>2k$. We can rewrite that as
\begin{multline}
G_8^{k+1}|U_2=(t_{k-1,\lceil (k-1)/2\rceil}+8t_{k,\lceil (k-1)/2\rceil})G_8^{\lceil(k+1)/2\rceil}\\
+\sum_{r=\lceil(k+3)/2\rceil}^{2k+1}(t_{k-1,r-1}+8t_{k,r-1}+4t_{k-1,r-2}+32t_{k,r-2})G_8^r\\
+(32t_{k,2k}+4t_{k-1,2k})G_8^{2k+2}=\sum_{r=\lceil(k+1)/2\rceil}^{2k+2} t_{k+1,r}G_8^r.
\end{multline}
Thus,
\begin{align*}
t_{k+1,\lceil(k+1)/2\rceil}&=t_{k-1,\lceil (k-1)/2\rceil}+8t_{k,\lceil (k-1)/2\rceil},\\
t_{k+1,r}&=t_{k-1,r-1}+8t_{k,r-1}+4t_{k-1,r-2}+32t_{k,r-2},\\
t_{k+1,2k+2}&=32t_{k,2k}+4t_{k-1,2k},
\end{align*}
and
\begin{align*}
\pi_2(t_{k+1,\lceil(k+1)/2\rceil})&\ge\min\{{\pi_2(t_{k-1,\lceil(k-1)/2\rceil}),\pi_2(8t_{k,\lceil(k-1)/2\rceil})}\}\ge2\lceil(k+1)/2\rceil-k-1,\\
\pi_2(t_{k+1,r})&\ge\min\{{\pi_2(t_{k-1,r-1}),\pi_2(8t_{k,r-1}),\pi_2(4t_{k-1,r-2}),\pi_2(32t_{k,r-2})\}}\\
&\ge\min\{{2r-1-k,2r+1-k,2r-1-k,2r+1-k\}}=2r-k-1,\\
\pi_2(t_{k+1,2k+2})&\ge\min\{{\pi_2(32t_{k,2k}),\pi_2(4t_{k-1,2k})}\}\ge3k+3.
\end{align*}
This proves (\ref{G8val}).

The proof of (\ref{FG8val}) is similar using (\ref{eq:Phi2U}), (\ref{eq:Phi2G8U}), and (\ref{modeq2}).
\end{proof}
Define 
\begin{align}
L_1^s&=\Phi_2(-q)G_2^s(-q)|U_2\\
L_{\alpha}^s&=L_{\alpha-1}^s|U_2
\end{align}
Using Lemma~\ref{G8Upowers} it is easy to see
\begin{align}
L_1^s&=\Phi_2(-q)G_2^s(-q)|U_2=\Phi_2(-q^2)\sum_{j=1}^\infty c_{1j}^sG_8^j\\
L_{2\alpha}^s&=L_{2\alpha-1}^s|U_2=\Phi_2(-q)\sum_{j=1}^\infty b_{\alpha j}^sG_8^j\label{Leven}\\
L_{2\alpha+1}^s&=L_{2\alpha}^s|U_2=\Phi_2(-q^2)\sum_{j=1}^\infty c_{\alpha+1,j}^sG_8^j\label{Lodd}
\end{align}
\begin{theorem}\label{Lval}
\begin{align}
\pi_2(c_{\alpha j}^s)&\ge2\alpha+2j-s-2\\
\pi_2(b_{\alpha j}^s)&\ge2\alpha+2j-s-1
\end{align}
\end{theorem}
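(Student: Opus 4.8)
The plan is to prove the two inequalities simultaneously by induction on $\alpha$, using the fact that $U_2$ carries us alternately between the ``odd'' levels $L_{2\alpha-1}^s=\Phi_2(-q^2)\sum_{j\ge1}c_{\alpha j}^sG_8^j$ and the ``even'' levels $L_{2\alpha}^s=\Phi_2(-q)\sum_{j\ge1}b_{\alpha j}^sG_8^j$. (We may assume $s\ge1$, the only range relevant to (\ref{OVPNewman}); then by Theorem~\ref{FG2Upowers} no constant $G_8$-term is present, and all the sums below involve only powers $G_8^j$ with $j\ge1$, which is what makes the argument work.) The base case $\alpha=1$ is immediate: the coefficient $c_{1j}^s$ is precisely the $c_{1r}^s$ of Theorem~\ref{FG2Upowers}, so (\ref{FG2U2coeffval}) gives $\pi_2(c_{1j}^s)\ge 2j-s=2\cdot1+2j-s-2$.

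For the step from $c_\alpha$ to $b_\alpha$, I would apply $U_2$ to $L_{2\alpha-1}^s$. Since $\Phi_2(-q^2)$ is a function of $q^2$ while $G_8^j(q)$ is a function of $q$, the identity $(f(q^2)g(q))|U_2=f(q)\,(g(q)|U_2)$ applied termwise, together with (\ref{G8U2coeffs}) of Lemma~\ref{G8Upowers}, gives
\[
L_{2\alpha}^s=\Phi_2(-q)\sum_{j\ge1}c_{\alpha j}^s\bigl(G_8^j|U_2\bigr)=\Phi_2(-q)\sum_{r\ge1}\Bigl(\sum_{j\ge1}c_{\alpha j}^s\,t_{jr}\Bigr)G_8^r,
\]
so that $b_{\alpha r}^s=\sum_{j\ge1}c_{\alpha j}^s\,t_{jr}$. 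Invoking $\pi_2(t_{jr})\ge 2r-j$ from Lemma~\ref{G8Upowers}, the inductive hypothesis $\pi_2(c_{\alpha j}^s)\ge 2\alpha+2j-s-2$, and the ultrametric inequality,
\[
\pi_2(b_{\alpha r}^s)\ge\min_{j\ge1}\bigl(2\alpha+2j-s-2+2r-j\bigr)=\min_{j\ge1}\bigl(2\alpha+j+2r-s-2\bigr)\ge 2\alpha+2r-s-1,
\]
the final inequality being the point where $j\ge1$ is used. This is the asserted bound for $b_{\alpha r}^s$.

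For the step from $b_\alpha$ to $c_{\alpha+1}$, I would apply $U_2$ to $L_{2\alpha}^s=\Phi_2(-q)\sum_{j\ge1}b_{\alpha j}^sG_8^j$, where $\Phi_2(-q)$ is now a genuine function of $q$; here one uses the companion formula (\ref{FG8U2coeffs}) of Lemma~\ref{G8Upowers}, namely $\Phi_2(-q)G_8^j|U_2=\Phi_2(-q^2)\sum_{r\ge1}u_{jr}G_8^r$ with $\pi_2(u_{jr})\ge 2r-j$. Linearity of $U_2$ then gives $c_{\alpha+1,r}^s=\sum_{j\ge1}b_{\alpha j}^su_{jr}$, and
\[
\pi_2(c_{\alpha+1,r}^s)\ge\min_{j\ge1}\bigl(2\alpha+2j-s-1+2r-j\bigr)=\min_{j\ge1}\bigl(2\alpha+j+2r-s-1\bigr)\ge 2(\alpha+1)+2r-s-2,
\]
which closes the induction.

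Once Theorem~\ref{FG2Upowers} and Lemma~\ref{G8Upowers} are available the whole argument is bookkeeping, so I do not anticipate a genuine obstacle; the points that need attention are: (i) that only powers $G_8^j$ with $j\ge1$ ever appear -- true because $\lceil j/2\rceil\ge1$ in (\ref{G8U2coeffs}) and the expansion (\ref{FG8U2coeffs}) begins at $r=1$ -- so the decisive gain of at least $1$ coming from $j\ge1$ is legitimate at each stage; (ii) that the rearrangements of double sums are valid, which holds because every expansion in $G_8$ is locally finite in $q$; and (iii) that one must apply the right $U_2$-twisting identity for each parity, namely $(f(q^2)g(q))|U_2=f(q)(g|U_2)$ at the even step but the $\Phi_2(-q)$-absorbing identity (\ref{FG8U2coeffs}) at the odd step. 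What makes the estimate go through is simply that each application of $U_2$ costs exactly one unit of $2$-adic valuation, matched precisely by the gap between the $-2$ in the $c$-bound and the $-1$ in the $b$-bound, so the induction propagates with nothing to spare.
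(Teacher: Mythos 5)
Your proposal is correct and follows essentially the same route as the paper: induction on $\alpha$ with the two convolution identities $b_{\alpha r}^s=\sum_j c_{\alpha j}^s t_{jr}$ and $c_{\alpha+1,r}^s=\sum_j b_{\alpha j}^s u_{jr}$, combined with the valuation bounds of Lemma~\ref{G8Upowers} and the ultrametric inequality. Your explicit remark that the gain of one unit at each step comes from $j\ge1$ is the same point the paper uses implicitly when evaluating the minimum.
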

\begin{proof}
We will fix $s$ and proceed by induction on $\alpha$. By Theorem~\ref{FG2Upowers}, the statement is true for $\alpha=1$. We will assume the truth of the statement for $c_{\alpha j}^s$ and prove it for $b_{\alpha j}^s$ first. Then we will show the truth of the statement for $c_{\alpha+1,j}^s$. By (\ref{Lodd}),
\[
L_{2\alpha-1}^s=\Phi_2(-q^2)\sum_{j=1}^\infty c_{\alpha j}^s G_8^j.
\]
By (\ref{Leven}) and (\ref{G8U2coeffs}), 
\begin{multline*}
L_{2\alpha}^s=L_{2\alpha-1}^s|U_2=\Phi_2(-q)\sum_{j=1}^\infty c_{\alpha j}^s (G_8^j|U_2)=\Phi_2(-q)\sum_{j=1}^\infty c_{\alpha j}^s\sum_{r=1}^\infty t_{jr}G_8^r\\
=\Phi_2(-q)\sum_{r=1}^\infty G_8^r\sum_{j=1}^\infty c_{\alpha j}^st_{jr}.
\end{multline*}
On the other hand,
\[
L_{2\alpha}^s=\Phi_2(-q)\sum_{r=1}^\infty b_{\alpha r}^s G_8^r.
\]
By compairing the coefficients, we obtain
\[
b_{\alpha r}^s=\sum_{j=1}^\infty c_{\alpha j}^st_{jr}.
\]
Then, by our assumption and (\ref{G8val}),
\[
\pi_2(b_{\alpha r}^s)\ge\min_{j}{\{2\alpha+2j-s-2+2r-j\}}=\min_{j}{\{2\alpha+j-s-2+2r\}}=2\alpha+2r-s-1.
\]
Now, let us prove the statement for $c_{\alpha+1,j}^s$ to complete the induction. By (\ref{Lodd}) and (\ref{FG8U2coeffs}),
\begin{multline*}
L_{2\alpha+1}^s=L_{2\alpha}^s|U_2=\sum_{j=1}^\infty b_{\alpha j}^s(\Phi_2(-q)G_8^j)|U_2\\
=\Phi_2(-q^2)\sum_{j=1}^\infty b_{\alpha j}^s\sum_{r=1}^\infty u_{jr}G_8^r=\Phi_2(-q^2)\sum_{r=1}^\infty G_8^r\sum_{j=1}^\infty b_{\alpha j}^s u_{jr}=\Phi_2(-q^2)\sum_{r=1}^\infty c_{\alpha+1,r}^sG_8^r.
\end{multline*}
Then
\[
c_{\alpha+1,r}^s=\sum_{j=1}^\infty b_{\alpha j}^s u_{jr},
\]
and
\[
\pi_2(c_{\alpha+1,r}^s)\ge\min_j{\{2\alpha+2j-s-1+2r-j\}}=2\alpha+2r-s.
\]
Thus, the induction is complete.
\end{proof}
An easy corollary of Theorem~\ref{Lval} is
\begin{corollary}\label{LvalCorollary}
\begin{align}
L_{2\alpha-1}^s&\equiv0\pmod{2^{2\alpha-s}}\\
L_{2\alpha}^s&\equiv0\pmod{2^{2\alpha-s+1}}
\end{align}
\end{corollary}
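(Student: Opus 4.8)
The plan is to read the corollary straight off the $2$-adic coefficient bounds of Theorem~\ref{Lval} together with the expansions (\ref{Lodd}) and (\ref{Leven}) of $L_{2\alpha-1}^s$ and $L_{2\alpha}^s$ in powers of $G_8$. First I would record the elementary fact that $\Phi_2(-q)$, $\Phi_2(-q^2)$ and $G_8(q)$ all lie in $\mathbb{Z}[[q]]$: the first two because $\Phi_2(q)=\sum_{n\ge 0}\overline{p}(n)q^n$ has integer coefficients and the substitutions $q\mapsto -q$, $q\mapsto -q^2$ preserve integrality, the third from its eta-quotient definition. Since $G_8 = q + O(q^2)$, each product $\Phi_2(-q^2)G_8^{\,j}$ (respectively $\Phi_2(-q)G_8^{\,j}$) is an integral $q$-series vanishing to order at least $j$, so the formal sums in (\ref{Lodd}) and (\ref{Leven}) are locally finite in $q$, and a congruence ``$\equiv 0 \pmod{2^{M}}$'' for these series is simply the assertion that every $q$-coefficient is divisible by $2^{M}$.

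Next I would minimise the exponent bounds over the admissible index range $j\ge 1$. In the odd-depth case, (\ref{Lodd}) reads $L_{2\alpha-1}^s=\Phi_2(-q^2)\sum_{j\ge 1} c_{\alpha j}^s G_8^{\,j}$, and Theorem~\ref{Lval} gives $\pi_2(c_{\alpha j}^s)\ge 2\alpha+2j-s-2\ge 2\alpha-s$ for every $j\ge 1$, the bound being sharpest at $j=1$. Hence each $c_{\alpha j}^s$ is divisible by $2^{2\alpha-s}$, so every term $c_{\alpha j}^s\,\Phi_2(-q^2)G_8^{\,j}$ has all its $q$-coefficients divisible by $2^{2\alpha-s}$, and therefore so does $L_{2\alpha-1}^s$; that is, $L_{2\alpha-1}^s\equiv 0 \pmod{2^{2\alpha-s}}$. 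In the even-depth case the identical argument applied to (\ref{Leven}), using $\pi_2(b_{\alpha j}^s)\ge 2\alpha+2j-s-1\ge 2\alpha-s+1$ for $j\ge 1$, yields $L_{2\alpha}^s\equiv 0 \pmod{2^{2\alpha-s+1}}$. When $2\alpha-s$ is negative both assertions are vacuous, so there is nothing to prove in that range.

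There is essentially no obstacle here: the whole content is the trivial observation that the lower bounds on $\pi_2$ in Theorem~\ref{Lval} increase in $j$ and are therefore dominated by their values at $j=1$. The only point I would be careful to spell out is the bookkeeping that these congruences between $q$-series are to be read coefficient-wise and that the sums over $j$, although written as infinite, contribute to each fixed power of $q$ only finitely often, so that integrality of $\Phi_2(-q)$, $\Phi_2(-q^2)$ and $G_8$ is all one needs in order to pass from divisibility of the coefficients $c_{\alpha j}^s$ and $b_{\alpha j}^s$ to divisibility of $L_{2\alpha-1}^s$ and $L_{2\alpha}^s$.
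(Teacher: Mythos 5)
Your proof is correct and is exactly the argument the paper intends (the paper simply labels this ``an easy corollary'' of Theorem~\ref{Lval} without writing it out): minimize the valuation bounds over $j\ge 1$, where they are sharpest at $j=1$, and use integrality of $\Phi_2(-q)$, $\Phi_2(-q^2)$, and $G_8$ to pass to coefficient-wise divisibility. No further comment is needed.
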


\begin{proof}[Proof of Theorem~\ref{thm:OVPinf}]
After applying $U_2^\alpha$ to (\ref{OVPNewman}), and replacing $q$ by $-q$, we obtain
\begin{multline}\label{H1}
\sum_{n=0}^\infty\left(\ell^3 \overline{p}(2^\alpha\ell^2 n)+\ell\left(\dfrac{-2^\alpha n}{\ell}\right)\overline{p}(2^\alpha n)+\overline{p}\left(\dfrac{2^\alpha n}{\ell^2}\right)\right)q^n\\
=(\ell^3+1)\sum_{n=0}^\infty\overline{p}(2^\alpha n)q^n+\sum_{s=1}^{(\ell^2-1)/8}a(s)2^{12s}L_\alpha^s.
\end{multline}

By Corollary~\ref{LvalCorollary},
\begin{align}
2^{12s}L_{2\alpha-1}^s&\equiv0\pmod{2^{2\alpha+11s}},\\
2^{12s}L_{2\alpha}^s&\equiv0\pmod{2^{2\alpha+11s+1}}.
\end{align}
Thus,
\begin{multline*}
\sum_{n=0}^\infty\left(\ell^3 \overline{p}(2^{2\alpha-1}\ell^2 n)+\ell\left(\dfrac{-2^{2\alpha-1} n}{\ell}\right)\overline{p}(2^{2\alpha-1} n)+\overline{p}\left(\dfrac{2^{2\alpha-1} n}{\ell^2}\right)q^n\right)\\
\equiv(\ell^3+1)\sum_{n=0}^\infty \overline{p}(2^{2\alpha-1}n)q^n\pmod{2^{2\alpha+11}},
\end{multline*}
and
\begin{multline*}
\sum_{n=0}^\infty\left(\ell^3 \overline{p}(2^{2\alpha}\ell^2 n)+\ell\left(\dfrac{-2^{2\alpha} n}{\ell}\right)\overline{p}(2^{2\alpha} n)+\overline{p}\left(\dfrac{2^{2\alpha} n}{\ell^2}\right)q^n\right)\\
\equiv(\ell^3+1)\sum_{n=0}^\infty \overline{p}(2^{2\alpha}n)q^n\pmod{2^{2\alpha+12}},
\end{multline*}
and the theorem follows.
\end{proof}

\section{Conclusion}
\label{sec:conclusion}
The computations suggest that the conclusion of Theorem~\ref{thm:OVPinf} is not optimal. Empirically, the congruence~(\ref{OVPinf}) appears to persist to higher powers of 2 for every $\alpha$. For example, we observe
\begin{align}\label{OVPexperimental}
\ell^3 \bar{p}(2^2\ell^2 n)+\ell\left(\dfrac{-2^2 n}{\ell}\right)\bar{p}(2^2 n)+\bar{p}\left(\dfrac{2^2 n}{\ell^2}\right)&\equiv (\ell^3+1)\bar{p}(2^2 n)\pmod{2^{20}},\\
\ell^3 \bar{p}(2^3\ell^2 n)+\ell\left(\dfrac{-2^3 n}{\ell}\right)\bar{p}(2^3 n)+\bar{p}\left(\dfrac{2^3 n}{\ell^2}\right)&\equiv (\ell^3+1)\bar{p}(2^3 n)\pmod{2^{21}},\\
\ell^3 \bar{p}(2^4\ell^2 n)+\ell\left(\dfrac{-2^4 n}{\ell}\right)\bar{p}(2^4 n)+\bar{p}\left(\dfrac{2^4 n}{\ell^2}\right)&\equiv (\ell^3+1)\bar{p}(2^4 n)\pmod{2^{24}}.
\end{align}
As James Sellers suggested, this phenomenon may occur only for the initial values of $\alpha$, with the behavior stabilizing for larger $\alpha$. We do not know yet. A systematic analysis remains to be done.

\bibliographystyle{alphaurl}  
\bibliography{references}   

\begin{thebibliography}{FJM05}

\bibitem[CL04]{CorteelLovejoy}
S.~Corteel and J.~Lovejoy.
\newblock Overpartitions.
\newblock {\em Transactions of the American Mathematical Society}, 356(4):1623--1635, 2004.

\bibitem[FJM05]{FJM}
J.-F. Fortin, P.~Jacob, and P.~Mathieu.
\newblock Jagged partitions.
\newblock {\em Ramanujan Journal}, 10(2):215--235, 2005.

\bibitem[GH81]{GordonHughes}
B.~Gordon and K.~Hughes.
\newblock Ramanujan congruences for q(n).
\newblock In Marvin~I. Knopp, editor, {\em Analytic Number Theory}, pages 333--359, Berlin, Heidelberg, 1981. Springer Berlin Heidelberg.

\bibitem[GM25]{GarvanMorrow}
Frank Garvan and Connor Morrow.
\newblock Multiplicative congruences for andrews's even parts below odd parts function and related infinite products.
\newblock {\em Arabian Journal of Mathematics}, 2025.
\newblock \href {https://doi.org/10.1007/s40065-025-00566-4} {\path{doi:10.1007/s40065-025-00566-4}}.

\bibitem[HS05]{HS}
M.~D. Hirschhorn and J.~A. Sellers.
\newblock Arithmetic relations for overpartitions.
\newblock {\em Journal of Combinatorial Mathematics and Combinatorial Computing}, 53:65--73, 2005.

\bibitem[Kno08]{Knopp2008}
Marvin~Isadore Knopp.
\newblock {\em Modular Functions in Analytic Number Theory}.
\newblock AMS Chelsea Publishing, American Mathematical Society, 2008.

\bibitem[Lig75]{Ligozat}
Gerard Ligozat.
\newblock Courbes modulaires de genre 1.
\newblock {\em Mémoires de la Société Mathématique de France}, 43:5--80, 1975.
\newblock URL: \url{http://eudml.org/doc/94716}.

\bibitem[New59]{NewmanEtaQuotient}
Morris Newman.
\newblock Construction and application of a class of modular functions (ii).
\newblock {\em Proceedings of the London Mathematical Society}, s3-9(3):373--387, 1959.
\newblock \href {https://doi.org/10.1112/plms/s3-9.3.373} {\path{doi:10.1112/plms/s3-9.3.373}}.

\end{thebibliography}

\end{document}